\newtheorem{thm}{Theorem}
\newtheorem{prop}[thm]{Proposition}
\newtheorem{cor}[thm]{Corollary}
\newtheorem{Def}{Definition}
\newcommand\HH[3]{\ensuremath{H_{#2;#3}}}
\newcommand{\naturals}{\ensuremath{\mathbb N}}
\newcommand{\integers}{\ensuremath{\mathbb Z}}
\title[Constellations and multicontinued fractions]{Constellations and multicontinued
  fractions: application to Eulerian triangulations}
\author[M.~Albenque]{Marie Albenque$^1$}
\address{$^1$CNRS, LIX, École Polytechnique, 91128 Palaiseau Cedex, France}
\author[J.~Bouttier]{Jérémie Bouttier$^2$}
\address{$^2$Institut de Physique Théorique, CEA, IPhT, F-91191
  Gif-sur-Yvette Cedex, France, CNRS URA 2306 and 
  LIAFA, Université Paris Diderot - Paris 7, case 7014, 75205 Paris
  Cedex 13, France, CNRS UMR 7089}
\begin{document}
\begin{abstract}
We consider the problem of enumerating planar constellations with two
points at a prescribed distance. Our approach relies on a
combinatorial correspondence between this family of constellations and
the simpler family of rooted constellations, which we may formulate
algebraically in terms of multicontinued fractions and generalized
Hankel determinants.  As an application, we provide a combinatorial
derivation of the generating function of Eulerian triangulations with
two points at a prescribed distance.

\end{abstract}
\maketitle

\section{Introduction}
\label{sec:intro}
From the initial work of~\cite{Hurwitz} about transitive ordered
factorizations of the identity in the symmetric group, to the
bijective approach of \cite{BouSch00} and \cite{BDG04}, including the
more algebraic approach of~\cite{GouJack97}, constellations appear in
many forms in different areas of combinatorics. We refer the reader to
the book of~\cite{LandoZvonkin} for an extensive review of the variety
of contexts in which constellations appear.

In this paper, we focus on the map point of view, and consider the
problem of enumerating planar constellations with two points at a
prescribed distance. In~\cite{BDG04}, this problem has already been
tackled by giving a bijection between this family of constellations
and a family of decorated trees called mobiles, which results in
recurrence equations that characterize the associate generating
functions. Remarkably, these recurrence equations admit explicit
solutions~\cite[Section 6.7]{diFraIntegrable}, reminiscent of
integrable systems (see also \cite{BDG03a} for the solutions of the
equations corresponding to $2$-constellations). The motivation of our
work is to explain combinatorially the form of these
solutions. Following the approach of~\cite{BouGui10}, we exhibit a
connection between our enumeration problem and the \emph{a priori}
simpler problem of counting rooted constellations. This connection
relies on a decomposition involving some lattice paths, which we may
rephrase in terms of ``multicontinued fractions'', in the spirit
of~\cite{FlaContFrac}. Solving our enumeration problem amounts to
finding unknown coefficients in such a multicontinued fraction, which
can be done via a generalization of Hankel determinants. We illustrate
this approach in the case of Eulerian triangulations, where we provide
a combinatorial derivation of a formula found in \cite{BDG03b} for the
generating function of Eulerian triangulations with two points at
prescribed distance. These objects are in bijection with
``very-well-labeled trees with small labels'', see also \cite[Section
2.1]{MBM06}. The next section presents in more detail our main results
and the organization of the paper.

\section{Definitions and main results}
\label{sec:mainresults}

A \emph{planar map} is a proper embedding of a connected graph into the
sphere, where proper means that edges are smooth simple arcs which
meet only at their endpoints. Two planar maps are identical if one of
them can be mapped onto the other by a homeomorphism of the sphere
preserving the orientation. A planar map is made of
\emph{vertices}, \emph{edges} and \emph{faces}. The \emph{degree} of a
vertex or face is the number of edges incident to it (counted with
multiplicity).  Following \cite{BouSch00}, we
consider a particular class of planar maps called
\emph{constellations}, see Fig.\ref{fig:PathCons}.

\begin{Def}
  For $p \in \ldbrack 2, \infty \ldbrack$, a
  ($p$-)\emph{constellation} is a planar map whose faces are colored
  black or white in such a way that~:
\vspace{-0.5cm}
\begin{itemize}
\item adjacent faces have opposite colors,
  \item the degree of any black face is $p$,
  \item the degree of any white face is a multiple of $p$.
  \end{itemize}
\end{Def}

Each edge of a constellation receives a canonical orientation by
requiring that the white face is on its right.  It is easily seen that
the length of each oriented cycle is a multiple of $p$, and that any two
vertices are accessible from one another. A constellation is
\emph{rooted} if one of its edges is distinguished. The \emph{white
  root face} and \emph{black root face} are respectively the white and
black faces incident to the root edge. The \emph{root degree} is the
degree of the white root face. A constellation is said to be
\emph{pointed} if it has a distinguished vertex. Note that, in a
pointed rooted constellation, the pointed vertex is not necessarily
incident to the root edge.

In this paper, we consider $p$-constellations subject to a control on
white face degrees, i.e. for each positive integer $k$ we fix the
number of white faces of degree $kp$. This amounts to considering
multivariate generating functions of constellations depending on an
infinite sequence of variables $(x_k)_{k \geq 1}$, where $x_k$ is the
weight per white face of degree $kp$ and the global weight of a
constellation is the product of the weights of its white faces.  Two
families of constellation generating functions will be of interest
here.

The first family is that of rooted constellations with a prescribed
root degree. More precisely, for $n \geq 1$, let $F_n \equiv
F_n(x_1,x_2,\ldots)$ denote the generating function of rooted
$p$-constellations with root degree $n p$. By convention, we do not
attach a weight $x_n$ to the white foot face and we set $F_0 = 1$.

The second family is that of pointed rooted constellations with a
bounded ``distance'' between the root edge and the pointed
vertex. More precisely, in a pointed constellation, we say that a
vertex $v$ is \emph{of type} $j \in \naturals$ if $j$ is the minimal
length of an oriented path from $v$ to the pointed vertex (due to the
orientation, it is slightly inappropriate to think of $j$ as distance)
and we say that an edge is \emph{of type} $j \to j'$ if its origin and
endpoint are respectively of type $j$ and $j'$. Then, for $i \geq 1$,
let $V_i \equiv V_i(x_1,x_2,\ldots)$ denote the generating function of
pointed rooted $p$-constellations where the root edge is of type $j
\to j-1$ with $j \leq i$. Note that $V_1$ is the generating function
for rooted constellations (since for $i=1$, the pointed vertex
must be the endpoint of the root edge). Furthermore, we add a
conventional term $1$ to $V_i$, for all $i \geq 1$.

The fundamental observation of this paper may be stated as:
\begin{thm} \label{thm:multicont}
  The sequences $(F_n)_{n\geq 0}$ and $(V_i)_{i \geq 1}$ are related via
  the multicontinued fraction expansion
  \begin{equation}
    \label{eq:multicont}
    \sum_{n \geq 0} F_n t^n =
    \cfrac{1}{1 - t \displaystyle \prod_{i_1=1}^{p-1} \cfrac{V_{i_1}}{
        1 - t \displaystyle \prod_{i_2=1}^{p-1} \cfrac{V_{i_1+i_2}}{
          1 - t \displaystyle \prod_{i_3=1}^{p-1} \cfrac{V_{i_1+i_2+i_3}}{\ddots}}}}
  \end{equation}
\end{thm}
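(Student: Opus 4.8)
The plan is to strip \eqref{eq:multicont} down to a single functional equation and then realize that equation through a recursive combinatorial decomposition. First I would note that, although \eqref{eq:multicont} is displayed as a tree, everything attached to a node depends only on the running sum $h=i_1+\dots+i_m$ of the indices leading to it; thus \eqref{eq:multicont} is equivalent to the system $\sum_{n\ge0}F_nt^n=(1-tH_0)^{-1}$ together with $H_h=\prod_{i=1}^{p-1}V_{h+i}(1-tH_{h+i})^{-1}$ for $h\ge0$, which, read order by order in $t$, determines each $H_h$ uniquely (note $H_h=\prod_{i=1}^{p-1}V_{h+i}+O(t)$). Setting $W_h:=(1-tH_h)^{-1}$, so that $W_0=\sum_{n\ge0}F_nt^n$ and $H_h=\prod_{i=1}^{p-1}V_{h+i}W_{h+i}$, the theorem becomes the single family of identities
\begin{equation*}
W_h=1+t\Bigl(\prod_{i=1}^{p-1}V_{h+i}W_{h+i}\Bigr)W_h,\qquad h\ge0. \tag{$\star$}
\end{equation*}

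Next I would introduce combinatorial objects carrying the $W_h$. For $h\ge1$ these should be pointed rooted $p$-constellations whose white root face sits ``at level $h$'' — all of its incident vertices of type at least $h$ — with $t$ marking the root degree divided by $p$ and no weight attached to the white root face, together with a conventional term $1$; for $h=0$ the constraint is empty, the marked vertex plays no role yet, and the objects are just (unpointed) rooted $p$-constellations, whose generating function is indeed $\sum_{n\ge0}F_nt^n$. Verifying that this is the right normalization — that $V_1$ reappears correctly inside the $h=0$ case, and that the level bookkeeping matches the notion of ``type'' and the definition of $V_i$ — is the kind of check that the structural study of types in constellations in the earlier sections is meant to supply.

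The core is then a ``first-slice'' decomposition proving $(\star)$. Given a nontrivial object of level $h$, consider the black root face $B$ (degree $p$) and its $p-1$ non-root edges, with corners $v_1,\dots,v_{p-1}$ met in order while turning around $B$ away from the root edge. The crucial structural lemma — the step I expect to be the main obstacle — is that the types of $v_1,\dots,v_{p-1}$ are not free: using that every oriented cycle of a constellation has length a multiple of $p$ and that the white face lies to the right of each edge, together with the minimality defining the type, one shows that along $\partial B$ the type drops by exactly $1$ across each of these $p-1$ edges, so that $v_i$ is of type $h+i$ and the $i$-th non-root edge is of type $(h+i)\to(h+i-1)$. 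Cutting along $\partial B$ and along the leftmost oriented geodesics from $v_1,\dots,v_{p-1}$ towards the marked vertex then detaches: the face $B$ itself, with weight $t$; for each $i$, a pointed rooted $p$-constellation rooted at its $(h+i)\to(h+i-1)$ edge — a factor $V_{h+i}$ — possibly still carrying a level-$(h+i)$ structure, hence a factor $W_{h+i}$; and, finally, a remaining piece, again of level $h$, hence a factor $W_h$. When $h=0$ this same cutting is what canonically selects the marked vertex, so no pointing needs to be added by hand. Collecting the contributions and restoring the term $1$ for the trivial object gives $(\star)$, and iterating it unfolds \eqref{eq:multicont}.

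It then remains to check that $C\mapsto(B,\,(\text{piece at }v_i)_i,\,\text{remaining piece})$ is a bijection: each detached piece must be a valid constellation at the advertised level, distinct $C$'s must give distinct tuples, and every admissible tuple must reglue to a unique $C$. The single delicate point is that $V_{h+i}$ counts root edges of type $j\to j-1$ with $j\le h+i$ rather than $j=h+i$; this is precisely accounted for by the fact that the factor on the $i$-th leg is $V_{h+i}W_{h+i}=V_{h+i}(1-tH_{h+i})^{-1}$, i.e.\ a leg at level $h+i$ followed by a fresh level-$(h+i)$ object, so that the contributions of legs at different levels telescope — exactly how the nested shape of \eqref{eq:multicont} reproduces the cumulative definition of the $V_i$. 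Equivalently, one may read this decomposition as encoding each constellation by a lattice path together with a family of slices indexed by its steps, and recover \eqref{eq:multicont} from a Flajolet-style correspondence for multicontinued fractions.
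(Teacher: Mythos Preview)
Your reduction of \eqref{eq:multicont} to the single family $(\star)$, $W_h=1+t\bigl(\prod_{i=1}^{p-1}V_{h+i}W_{h+i}\bigr)W_h$, is exactly right, and the paper uses the same equation (it is equation~\eqref{eq:fn0close} with $W_h=F^{(0)}(t;V_{h+1},V_{h+2},\ldots)$). The divergence is in how you propose to realize $(\star)$ combinatorially.

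There is a genuine gap in the ``first-slice'' step. Your structural lemma --- that along the black root face $B$ the type drops by exactly $1$ across each of the $p-1$ non-root edges --- is false. The facts you invoke (oriented cycles have length divisible by~$p$; each edge changes the type by $-1$ or $p-1$) do imply that around any black $p$-gon exactly one edge is a rise and $p-1$ are falls, but they do \emph{not} force the root edge to be the rise. In fact, when the constellation is pointed at the endpoint $v_0$ of the root edge (your $h=0$ case, once you ``canonically select'' the marked vertex), the root edge has type $1\to 0$, hence is a \emph{fall}; so one of the $p-1$ non-root edges of $B$ is the rise, and the types $h,h+1,\ldots,h+(p-1)$ you want around $B$ simply do not occur. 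For $h\ge1$ the constraint ``white root face at level $\ge h$'' bounds only the two root-edge endpoints and says nothing about the remaining $p-2$ corners of $B$. Consequently the cutting you describe does not produce the factor $\prod_{i=1}^{p-1}V_{h+i}W_{h+i}$, and your combinatorial interpretation of $W_h$ as ``white root face at level $\ge h$'' is not the object satisfying $(\star)$. The $h=0$ case is additionally circular: the geodesics you cut along are defined relative to the marked vertex that the cutting is supposed to produce.

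The paper avoids this by not trying to interpret the $W_h$ directly as constellation generating functions. It first proves Theorem~\ref{thm:constpath}: decomposing the \emph{white} root face (all $np$ edges, not the $p$ edges of $B$) along leftmost geodesics yields a weight-preserving bijection between $\mathcal{F}_n$ and constellated $p$-excursions of length $np$ (Proposition~\ref{prop:slice}). In that picture $W_h$ is just the generating function of $p$-excursions with a height-shifted weight $V_{h+i}$ on falls from height~$i$, and $(\star)$ is the standard first-passage decomposition of such paths (equation~\eqref{eq:ftrel2}). Your closing remark --- ``equivalently, encode each constellation by a lattice path with slices and use a Flajolet-style correspondence'' --- is in fact the paper's actual argument, not an alternative to it; promoting that sentence to the main line and abandoning the black-face decomposition would fix the proof.
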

In the case $p=2$, the r.h.s. of \eqref{eq:multicont} reduces to an
ordinary continued fraction (of Stieljes-type). This corresponds to
the bipartite case discussed in \cite[Eq. (1.13)]{BouGui10}: indeed,
$2$-constellations may be identified with bipartite planar maps upon
``collapsing'' the bivalent black faces into non-oriented edges.

\begin{figure}[t]
\begin{center}
\includegraphics{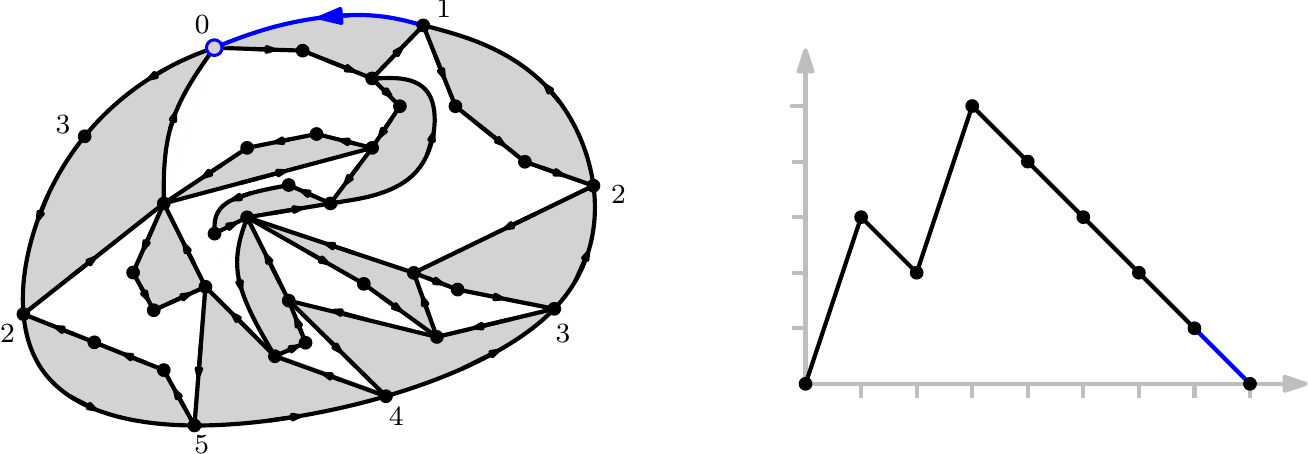}
\end{center}
\caption{A rooted 4-constellation of $F_2$, with weight $x_1^3x_2^4$ and the 4-excursion associated to it.}
\label{fig:PathCons}
\end{figure}

In the spirit of the combinatorial theory of continued fractions
initiated by \cite{FlaContFrac}, an alternate formulation of Theorem
\ref{thm:multicont} may be given in terms of some lattice paths. We
call \emph{$p$-path} a lattice path on $\integers \times \naturals$
made of two types of steps: \emph{rises} $(1,p-1)$ and \emph{falls}
$(1,-1)$. Note that a $p$-path starting from $(i_0,j_0)$ only visits
vertices $(i,j)$ with $i+j \equiv i_0+j_0 \mod p$.  A
\emph{$p$-excursion} of length $np$ is a $p$-path that starts at
$(0,0)$ and ends at $(np,0)$.  It is well-known that such
$p$-excursions are in one-to-one correspondence with $p$-ary rooted
plane trees with $n$ nodes, in number $\frac{1}{np+1}
\binom{np+1}{n}$. To each fall in a $p$-path, we attach a weight $V_i$
where $i$ is the starting height of the fall (i.e. the fall starts
from $(j,i)$ for some $j$, and thus ends at $(j+1,i-1)$). We define
the weight of a $p$-path as the product of the weights of its
falls. As discussed in Section \ref{sec:inversion}, the alternate formulation
of Theorem \ref{thm:multicont} is then:

\begin{thm} \label{thm:constpath} For all $n \geq 0$, $F_n$ is equal
  to the sum of weights of all $p$-excursions of length $np$.
\end{thm}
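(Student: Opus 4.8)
The plan is to prove the equivalent statement
\[
  \sum_{n\geq 0} F_n t^n \;=\; E_0(t),
\]
where $E_0(t)$ denotes the generating function of all $p$-excursions, a $p$-excursion of length $np$ being counted with weight $t^n$ times the product of the $V_i$ attached to its falls. Since a $p$-excursion of length $np$ has exactly $n$ rises (it has $r$ rises and $f$ falls with $r(p-1)=f$ and $r+f=np$, forcing $r=n$), the coefficient of $t^n$ in $E_0(t)$ is precisely the sum of weights of all $p$-excursions of length $np$, so the displayed identity is exactly Theorem~\ref{thm:constpath}. To establish it I would show that $E_0(t)$ equals the right-hand side of \eqref{eq:multicont} and then invoke Theorem~\ref{thm:multicont}; this is legitimate because the right-hand side of \eqref{eq:multicont} is a well-defined formal power series in $t$ (each level of nesting carries an extra factor $t$, so truncating at depth $n$ already determines it modulo $t^{n+1}$).

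The key combinatorial input is a first-passage decomposition of lattice paths. For $h\geq 0$, let $E_h=E_h(t)$ be the generating function of weighted $p$-paths that start and end at height $h$ and never go below $h$ (``excursions above level $h$''), with $t$ marking the number of rises and the empty path contributing $1$; note that excursions above level $0$ are exactly the $p$-excursions. Because a fall lowers the height by exactly one while a rise raises it, a $p$-path cannot reach a given level without first visiting every level strictly above it. Consequently, a \emph{nonempty} excursion above level $h$ decomposes uniquely as follows: its first step is a rise $h\to h+p-1$; then, reading off its successive first passages through the descending levels $h+p-2,h+p-3,\dots,h+1,h$, it is cut into an excursion above level $h+p-1$, a fall from $h+p-1$ (weight $V_{h+p-1}$), an excursion above level $h+p-2$, a fall from $h+p-2$ (weight $V_{h+p-2}$), \dots, an excursion above level $h+1$, a fall from $h+1$ (weight $V_{h+1}$), and finally an arbitrary excursion above level $h$.

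Reading this decomposition at the level of generating functions gives
\[
  E_h \;=\; 1 + t\,\Bigl(\prod_{k=h+1}^{h+p-1} E_k V_k\Bigr) E_h,
  \qquad\text{equivalently}\qquad
  E_h \;=\; \cfrac{1}{1 - t\displaystyle\prod_{k=h+1}^{h+p-1} V_k E_k}.
\]
Substituting this identity into itself — first expressing $E_0$ through $E_1,\dots,E_{p-1}$, then each $E_{i_1}$ through $E_{i_1+1},\dots,E_{i_1+p-1}$, and so on, the recursion terminating $t$-adically since each new layer adds a factor $t$ — and relabelling the successive products as $\prod_{i_1=1}^{p-1}$, $\prod_{i_2=1}^{p-1}$, $\prod_{i_3=1}^{p-1}$, \dots reproduces verbatim the right-hand side of \eqref{eq:multicont} with $\sum_n F_n t^n$ replaced by $E_0(t)$. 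Combined with Theorem~\ref{thm:multicont} this gives $E_0(t)=\sum_{n\geq 0}F_n t^n$, hence Theorem~\ref{thm:constpath}.

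This is a classical Flajolet-type argument, so I do not anticipate a serious obstacle; the points that need care are that the successive first-passage cuts must be organized \emph{downwards} (through $h+p-2$, then $h+p-3$, \dots), which is forced by the asymmetry of the two step types, and that the step entering each strictly lower level is always a fall, so that it genuinely carries a weight $V_i$ and marking only rises by $t$ is consistent. One could instead try to read Theorem~\ref{thm:constpath} off a direct bijection between decorated $p$-excursions and constellations, but that is essentially the content of the proof of Theorem~\ref{thm:multicont} itself, so passing through the multicontinued fraction is the economical route. As a sanity check, when $p=2$ the product in the displayed equation collapses to the single term $V_{h+1}E_{h+1}$, the decomposition reduces to the classical Dyck-path arch decomposition, and one recovers the Stieltjes continued fraction of the bipartite case.
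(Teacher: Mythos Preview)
Your lattice-path argument is correct as far as it goes, but the overall strategy is circular. In the paper, Theorem~\ref{thm:multicont} is \emph{not} proved independently and then used to deduce Theorem~\ref{thm:constpath}; the dependency runs the other way. Section~\ref{sec:inversion} carries out exactly the first-passage decomposition you describe (your identity $E_h = 1/(1 - t\prod_{k=h+1}^{h+p-1} V_k E_k)$ is the paper's \eqref{eq:fn0close}), and the conclusion drawn there is, verbatim, ``This shows that Theorem~\ref{thm:constpath} (established in Section~\ref{sec:constellations}) implies Theorem~\ref{thm:multicont}.'' So invoking Theorem~\ref{thm:multicont} to prove Theorem~\ref{thm:constpath} assumes what is to be shown.

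The substantive content of the proof of Theorem~\ref{thm:constpath} is the \emph{slice decomposition} (Proposition~\ref{prop:slice}): a weight-preserving bijection between $\mathcal{F}_n$ and constellated $p$-excursions of length $np$, built by cutting a pointed rooted constellation along the leftmost geodesics from the root-face vertices to the pointed vertex. That bijection is what links the constellation world (where $F_n$ and the $V_i$ are defined) to the lattice-path world; your Flajolet-type computation lives entirely on the path side and cannot by itself supply that link. Your closing remark gets the situation exactly backwards: the ``direct bijection between decorated $p$-excursions and constellations'' is not the content of the proof of Theorem~\ref{thm:multicont} but of Theorem~\ref{thm:constpath}, and it is the part you are missing.
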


We prove this theorem in Section \ref{sec:constellations}.  As an
illustration, for $p=3$ and $n=1,2,3$, the equality reads
\begin{equation}
  \label{eq:firstfewfn}
  \begin{split}
    F_1=V_1 V_2&\\
    F_2=V_1 V_2&(V_1 V_2+V_2 V_3+V_3 V_4) \\
    F_3=V_1 V_2&(V_1^2 V_2^2 + 2 V_1 V_2^2 V_3 + V_2^2 V_3^2 + 2 V_1
    V_2 V_3 V_4 + 2 V_2 V_3^2 V_4 + \\ & \phantom{(}V_3^2 V_4^2 + V_2 V_3 V_4 V_5
    + V_3 V_4^2 V_5 + V_3 V_4 V_5 V_6)
  \end{split}
\end{equation}

We are now interested in inverting the relation \eqref{eq:multicont},
i.e. expressing $V_i$ in terms of the $F_n$'s. For $p=2$, this may be
done using Hankel determinants (see below). However, as soon as $p
\geq 3$, it is not difficult to see that knowing the sequence
$(F_n)_{n \geq 0}$ alone is not sufficient (for instance in
\eqref{eq:firstfewfn} it appears that we have ``twice as many unknowns
as equations'').

We thus need some extra knowledge. A possible way is to consider, for
all $n \geq 0$ and $r \geq 0$, the sum of weights of all $p$-paths
that start at $(-r,r)$ and end at $(np,0)$, which we denote by
$F_n^{(r)}$. Note that $F_n^{(0)}=F_n$ and, since a $p$-excursion
necessarily starts with a rise, $F_n^{(p-1)}=F_{n+1}$. From now on, we
restrict the values of $r$ to the interval $\ldbrack 0,p-2 \rdbrack$.
As will be discussed in Section \ref{sec:constellations}, those
$F_n^{(r)}$ have a natural interpretation in terms of constellations.
Let us write down the first few $F_n^{(1)}$ for $p=3$:
\begin{equation}
  \label{eq:firstfewfn1}
  \begin{split}
    F_0^{(1)}&=V_1 \\
    F_1^{(1)}&=V_1 V_2 (V_1 + V_3) \\
    F_2^{(1)}&=V_1 V_2 (V_1^2 V_2 + 2 V_1 V_2 V_3 + V_2 V_3^2 +
    V_1 V_3 V_4 + V_3^2 V_4 + V_3 V_4 V_5) \\
  \end{split}
\end{equation}
It appears that, interlacing the equations in \eqref{eq:firstfewfn1} and
\eqref{eq:firstfewfn}, we obtain a triangular system of equations for
$V_1,..,V_6$. This fact turns out to be general and, furthermore, the
solution to our inverse problem is provided by the following formula,
which we derive combinatorially in Section \ref{sec:inversion}.

\begin{thm} \label{thm:det}
  For $m \in \ldbrack 0,p-1 \rdbrack$ and $n \in \naturals$, we have the
  determinantal identity
  \begin{equation}
    \label{eq:det}
    \HH p m n := \det_{0 \leq i,j \leq n} F_{q_{i+m} + j}^{(r_{i+m})} = 
    \prod_{i=0}^n \prod_{j=1}^{ip+m} V_j
  \end{equation}
  where, for $k \in \naturals$, $q_k$ and $r_k$ denote respectively
  the quotient and the remainder in the Euclidean division of $k$ by
  $p-1$, namely $q_k=\lfloor \frac{k}{p-1} \rfloor$ and $r_k = k - (p-1) q_k$.
\end{thm}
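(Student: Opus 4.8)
The plan is to prove the determinantal identity \eqref{eq:det} by the Lindström–Gessel–Viennot (LGV) lemma, exploiting the fact (Theorem~\ref{thm:constpath} and its $F_n^{(r)}$ extension) that each entry $F_{q_{i+m}+j}^{(r_{i+m})}$ is a weighted count of $p$-paths. The crucial observation is that the exponent $k=i+m$, together with the Euclidean division $k=(p-1)q_k+r_k$, is exactly engineered so that the starting point of the path counted by row $i$ is the point $P_i=(-r_{i+m},\,r_{i+m})$, i.e.\ the $k$-th lattice point $(−r,r)$ encountered as one walks ``backwards up the staircase'' from the origin: for $k=0,1,2,\dots$ one gets $(0,0),(-1,1),\dots,(-(p-2),p-2),(-(p-2),p),\dots$, which are precisely the vertices $(i,j)$ with $i\le 0\le j$, $i+j\equiv 0 \bmod p$ (for $k<p-1$) and their vertical translates. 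So I would first set up the source points $P_i$ and sink points $Q_j=((q_{i+m}+j)p + \text{(shift)},0)$ — more precisely, reading off from the entry, the natural sinks are at height $0$ and the matrix entry $F_{q_{i+m}+j}^{(r_{i+m})}$ is the weighted sum of $p$-paths from $P_{i+m}$ to a point on the $x$-axis whose abscissa depends on $i+m$ and $j$ in an affine way. The first key step is to verify that, with this choice, $(i,j)\mapsto(\text{weighted paths }P_i\to Q_j)$ is exactly the matrix whose determinant is $\HH p m n$; this is a bookkeeping check using $F_n^{(r)}$ and the congruence $i+j\equiv i_0+j_0\bmod p$ for $p$-paths.

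The second step is the LGV lemma itself: the determinant equals the signed sum over families of non-intersecting $p$-paths from $\{P_0,\dots,P_n\}$ to $\{Q_0,\dots,Q_n\}$. Here I expect the sources and sinks to be arranged so that the only permutation giving non-intersecting families is the identity (the $P_i$ lie on the staircase, ordered, and the $Q_j$ lie on the $x$-axis, ordered, in a ``non-crossing compatible'' way), so all signs are $+1$ and the determinant is a genuine positive weighted count of non-intersecting tuples of $p$-paths $\gamma_i:P_i\to Q_i$. The third step — and I expect this to be the main obstacle — is a bijective/geometric argument showing that the total weight of such non-intersecting families equals $\prod_{i=0}^n\prod_{j=1}^{ip+m}V_j$. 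The right-hand side is the weight of a single ``maximal'' configuration: a fall starting at height $\ell$ carries weight $V_\ell$, and the product $\prod_{j=1}^{ip+m}V_j$ is what one gets from one path that ``sweeps out'' all heights $1,2,\dots,ip+m$ exactly once. So the claim is that the non-intersecting condition, together with the fixed endpoints, rigidifies the family completely at the level of the multiset of (height-of-fall) data: the $i$-th path must contribute exactly the falls at heights in a prescribed set, and summed over all families the weight collapses to the single product. Concretely I would argue that non-intersecting $p$-paths from the staircase points to the axis points can be deformed, one at a time from the top, so that each successive path is forced into a unique ``canonical'' shape given the ones below it — an Aztec-diamond/rhombus-tiling style argument — or, alternatively, set up an explicit weight-preserving involution killing every family except the canonical one.

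An alternative to the geometric rigidity argument, which may be cleaner, is an induction on $n$ combined with the recursive structure already implicit in \eqref{eq:multicont}: expanding the multicontinued fraction gives functional relations among the $F_n^{(r)}$ (the analogue of the three-term recurrence in the classical Hankel-determinant/orthogonal-polynomial story), and one can try to show that $\HH p m n$ satisfies the same first-order recurrence in $n$ as the product $\prod_{i=0}^{n}\prod_{j=1}^{ip+m}V_j$, namely $\HH p m n = \bigl(\prod_{j=1}^{np+m}V_j\bigr)\cdot \HH p m {n-1}$, with base case $\HH p m 0 = F_{q_m}^{(r_m)} = \prod_{j=1}^{m}V_j$ (which one checks directly: the weighted $p$-paths from $(-r_m,r_m)$ to $(q_m p,0)$ that are ``minimal'' contribute $V_1\cdots V_m$, and in fact there is only one, or their weights telescope). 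Deriving this recurrence is where the real work lies — it amounts to doing column operations on the determinant using the path-composition identities for the $F_n^{(r)}$ — but it reduces the theorem to a finite check plus an induction, and it dovetails with the announced combinatorial derivation ``in Section~\ref{sec:inversion}''. Either way, the heart of the proof is the same: translate the entries into non-intersecting lattice paths and show the geometry forces a unique (up to weight-preserving rearrangement) configuration.
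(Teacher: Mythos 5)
Your first approach is exactly the paper's: the entries are read as weighted $p$-path counts, the LGV lemma is applied, and the identity reduces to the fact that there is a \emph{unique} non-intersecting configuration, whose weight is the stated product (the paper records this as a proposition --- $A_{m+i}$ is joined to $B_i$ by the highest possible path through $(-m,m+ip)$ --- and leaves its verification to the reader). The only detail to fix in your setup is the source/sink placement: the sinks must not depend on the row index, which is achieved by translating the $i$-th source horizontally to $A_{i+m}=(-p\,q_{i+m}-r_{i+m},\,r_{i+m})$ so that every sink becomes $B_j=(jp,0)$.
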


\begin{cor}\label{cor:Vi}
   For $m \in \ldbrack 0,p-1 \rdbrack$ and $n \in \naturals$, we have
  \begin{equation}
    \label{eq:detcor}
    V_{pn+m} = 
    \begin{cases} 
      \dfrac{\HH p m n \HH p {m-1}{n-1}}{\HH p m{n-1} \HH p {m-1}n
      }&\text{if }m\geq1
      \\
      \dfrac{\HH p 0 n \HH p {p-1}{n-2}}{\HH p 0{n-1} \HH p {p-1}{n-1}
      }&\text{if }m=0 \text{ and }n\geq 1.
    \end{cases}
  \end{equation}
  with the convention $\HH p m {-1}=1$.
\end{cor}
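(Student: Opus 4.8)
The plan is to derive Corollary~\ref{cor:Vi} directly from the closed product form of $\HH p m n$ supplied by Theorem~\ref{thm:det}; essentially no combinatorics is needed beyond that theorem. Write $P(N) := \prod_{j=1}^{N} V_j$ for $N \geq 0$, with $P(0)=1$ the empty product, so that \eqref{eq:det} reads simply $\HH p m n = \prod_{i=0}^{n} P(ip+m)$. Since each $V_j$ has constant term $1$ (recall the conventional $+1$ added to every $V_i$), each $P(N)$, and hence each $\HH p m n$, is an invertible formal power series, so all the quotients below are well defined.

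The key observation is that incrementing the second index of $\HH p m n$ multiplies it by a single factor: for every $n \geq 0$,
\[
  \frac{\HH p m n}{\HH p m {n-1}} = P(np+m),
\]
the case $n=0$ being consistent with the convention $\HH p m {-1}=1$ (the index range $0 \leq i \leq -1$ in the product being empty). For $m \in \ldbrack 1,p-1 \rdbrack$ and $n \in \naturals$ I would then combine two such identities to isolate one new factor:
\[
  \frac{\HH p m n \,\HH p {m-1}{n-1}}{\HH p m {n-1} \,\HH p {m-1}{n}}
  = \frac{P(np+m)}{P(np+m-1)} = V_{np+m},
\]
which is the first case of \eqref{eq:detcor}. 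For $m=0$ and $n \geq 1$ one instead wants $V_{pn} = P(pn)/P(pn-1)$, and here the point is that $pn-1$ is \emph{not} of the form $ip+0$ but rather $(n-1)p+(p-1)$; so I would write $P(pn) = \HH p 0 n/\HH p 0 {n-1}$ and $P(pn-1) = \HH p {p-1}{n-1}/\HH p {p-1}{n-2}$, and take the quotient to obtain the second case of \eqref{eq:detcor}.

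Since the argument is pure index bookkeeping, I do not expect any genuine obstacle once Theorem~\ref{thm:det} is in hand. The only steps requiring a little care are the verification that the boundary convention $\HH p m {-1}=1$ is compatible with the product formula in the degenerate cases — $n=0$ in the first case (where the identity collapses to $V_m = P(m)/P(m-1)$) and $n=1$ in the second (collapsing to $V_p = P(p)/P(p-1)$) — and the observation, already made above, that the split into two cases in \eqref{eq:detcor} is forced because, when $m=0$, the index $pn-1$ lies in the $(p-1)$-family of determinants rather than the $0$-family.
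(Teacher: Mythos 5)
Your derivation is correct and is exactly the (implicit) argument the paper intends: the corollary follows from Theorem~\ref{thm:det} by pure telescoping of the products $\prod_{i=0}^{n}\prod_{j=1}^{ip+m}V_j$, with the $m=0$ case handled separately because $pn-1=(n-1)p+(p-1)$ forces the use of the $(p-1)$-family of determinants. Your checks of the degenerate cases against the convention $\HH p m {-1}=1$ are also correct.
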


Note that, for $p=2$, we recover the Hankel determinants: $\HH 2 m n =
\det_{0 \leq i,j \leq n} F_{i+j+m}$ with $m=0$ or $1$. For $p=3$, the
first few determinants read
\begin{equation}
  \label{eq:firstfewdets}
  \begin{aligned}
    \HH 3 0 0 &= F_0^{(0)} = 1
    & \HH 3 1 0 &= F_0^{(1)} 
    & \HH 3 2 0 &= F_2^{(0)} 
    \\
    \HH 3 0 1 &=
    \begin{array}{|cc|}
      F^{(0)}_0 & F^{(0)}_1 \\
      F^{(1)}_0 & F^{(1)}_1
    \end{array}
    & \HH 3 1 1 &=
    \begin{array}{|cc|}
      F^{(1)}_0 & F^{(1)}_1 \\
      F^{(0)}_1 & F^{(0)}_2
    \end{array} 
    & \HH 3 2 1 &=
    \begin{array}{|cc|}
      F^{(0)}_1 & F^{(0)}_2 \\
      F^{(1)}_1 & F^{(1)}_2
    \end{array}
    \\
    \HH 3 0 2 &=
    \begin{array}{|ccc|}
      F^{(0)}_0 & F^{(0)}_1 & F^{(0)}_2 \\
      F^{(1)}_0 & F^{(1)}_1 & F^{(1)}_2 \\
      F^{(0)}_1 & F^{(0)}_2 & F^{(0)}_3 \\
    \end{array} & & & &\\
  \end{aligned}
\end{equation}
and the reader may check, using \eqref{eq:firstfewfn} and
\eqref{eq:firstfewfn1}, that those determinants are indeed monomials
in the $V_i$'s, in agreement with \eqref{eq:det}. We conjecture that
there is a simple relation between $\HH p m n$ and
$u_{(n+1)p+m}^{(p-2)}$ as defined at Equation (6.31) in
\cite{diFraIntegrable}, so that Equation (6.30) \emph{ibid.}\ amounts to
Corollary \ref{cor:Vi}.

In this paper, we present a simple application of Theorem
\ref{thm:det} to the case of \emph{Eulerian triangulations}, namely
3-constellations where all the white faces are also triangular.  This
corresponds to specializing the generating functions defined above at
the values $p=3$, $x_1=x$, $x_k=0$ for $k\geq 2$, where $x$ is a
variable controlling the number of triangles. In this case, the
sequence $(V_i)_{i \geq 1}$ is known to satisfy the simple recurrence
\begin{equation}
  \label{eq:recVntri}
  V_i = 1 + x V_i \left( V_{i-1} + V_{i+1} \right), \qquad i \geq 1,\ V_0 = 0,
\end{equation}
see \cite{BDG03b}. Clearly, this equation fully determines $V_i$ as a
power series in $x$. In the same reference, it was observed that the
solution of this equation has a remarkably simple form
\begin{equation}
  \label{eq:triangalt}
  V_i = V \frac{(1-y^i)(1-y^{i+4})}{(1-y^{i+1})(1-y^{i+3})}.
\end{equation}
where $V$ and $y$ are power series determined by the equations $V=1+2xV^2$,
$y+y^{-1} = (x V)^{-1} - 2$. So far, there was no combinatorial
explanation for the form of Equation~\ref{eq:triangalt}. Theorem \ref{thm:det}
provides such an explanation. Let us introduce the Fibonacci
polynomials defined as follows \cite[eq. (62),
p.327]{AnalyticCombinatorics}
\begin{equation}
  \label{eq:fibdef}
  \varphi_{n+2}(z) = \varphi_{n+1}(z) - z \varphi_n(z), \qquad
  \varphi_0(z)=0, \qquad \varphi_1(z)=1,
\end{equation}
which are reciprocals of Chebyshev polynomials of the second kind,
i.e. they satisfy the relation
\begin{equation}
  \label{eq:fiby}
  \varphi_n \left( \frac{1}{y+y^{-1}+2} \right) = \frac{1-y^n}{(1-y)(1+y)^{n-1}}.
\end{equation}
Then, Equation~\ref{eq:triangalt} results from the following proposition,
which we prove in Section \ref{sec:eulertri}.

\begin{prop}\label{prop:det3}
  Let $p=3$, $x_k=x \delta_{k,1}$ for $k \geq 1$ and $V=\sum_{n=0}^{\infty}
  \frac{(2n)!}{n! (n+1)!} (2x)^n$. For all $n \geq 0$, we have
  \begin{equation}
    \label{eq:tridet}
    \begin{split}
      \HH 3 0 {n-1} &= V^{n(3n-1)/2} \varphi_{3n+1}(xV) \\
      \HH 3 1 {n-1} &= V^{n(3n+1)/2} \varphi_{3n+2}(xV) \\
      (1-xV) \HH 3 2 {n-1} &= V^{n(3n+3)/2} \varphi_{3n+3}(xV).
    \end{split}
  \end{equation}
\end{prop}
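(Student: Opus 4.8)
The plan is to read the proposition off Theorem~\ref{thm:det}, which already identifies $\HH 3 m {n-1}$ with the product $\prod_{i=0}^{n-1}\prod_{j=1}^{3i+m}V_j$; in particular no direct computation of the determinant entries $F_{q_{i+m}+j}^{(r_{i+m})}$ is needed. It then remains only to evaluate that product in closed form by inserting the known expression~\eqref{eq:triangalt} for the $V_j$, and this is a double telescoping. Writing $V_j=V\,\frac{(1-y^{j})(1-y^{j+4})}{(1-y^{j+1})(1-y^{j+3})}$ and using the elementary telescopings $\prod_{j=1}^{N}\frac{1-y^{j}}{1-y^{j+1}}=\frac{1-y}{1-y^{N+1}}$ and $\prod_{j=1}^{N}\frac{1-y^{j+4}}{1-y^{j+3}}=\frac{1-y^{N+4}}{1-y^{4}}$, the inner product over $j$ collapses to
\[
\prod_{j=1}^{N}V_j \;=\; V^{N}\,\frac{(1-y)\,(1-y^{N+4})}{(1-y^{4})\,(1-y^{N+1})}.
\]
Setting $N=3i+m$ and multiplying over $i\in\ldbrack 0,n-1\rdbrack$, the leading fraction yields $\bigl((1-y)/(1-y^{4})\bigr)^{n}$ while the remaining factors $\prod_{i}\frac{1-y^{3i+m+4}}{1-y^{3i+m+1}}$ telescope once more, in steps of $3$, to $\frac{1-y^{3n+m+1}}{1-y^{m+1}}$. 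Hence
\[
\HH 3 m {n-1} \;=\; V^{\,3n(n-1)/2+mn}\,\Bigl(\frac{1-y}{1-y^{4}}\Bigr)^{\!n}\,\frac{1-y^{3n+m+1}}{1-y^{m+1}}.
\]

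The second step converts this into the Fibonacci polynomials. From the defining relations $V=1+2xV^{2}$ and $y+y^{-1}=(xV)^{-1}-2$ one extracts $xV=\frac{y}{(1+y)^{2}}$, hence $V=\frac{(1+y)^{2}}{1+y^{2}}$ and $1-xV=\frac{1+y+y^{2}}{(1+y)^{2}}$; together with $1-y^{4}=(1-y)(1+y)(1+y^{2})$ these give the key identity $\bigl(\frac{1-y}{1-y^{4}}\bigr)^{n}=V^{n}/(1+y)^{3n}$. Plugging it in and rewriting, via~\eqref{eq:fiby}, $\varphi_{k}(xV)=\frac{1-y^{k}}{(1-y)(1+y)^{k-1}}$, the three cases separate according to how $1-y^{m+1}$ factors: for $m=0$ the factor $1-y$ is exactly the one appearing in $\varphi_{3n+1}(xV)$; for $m=1$, $1-y^{2}=(1-y)(1+y)$ produces $\varphi_{3n+2}(xV)$; and for $m=2$, $1-y^{3}=(1-y)(1+y+y^{2})$ produces $\varphi_{3n+3}(xV)$ up to the leftover factor $\frac{(1+y)^{2}}{1+y+y^{2}}=(1-xV)^{-1}$, which is exactly why the clean identity carries the prefactor $1-xV$. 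Reading off the power of $V$ in each case is then a one-line arithmetic check, e.g.\ $\tfrac{3n(n-1)}{2}+n=\tfrac{n(3n-1)}{2}$ for $m=0$.

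I do not expect a real conceptual obstacle here: with Theorem~\ref{thm:det} in hand the argument is essentially a bookkeeping exercise — tracking the exponent of $V$ through the two telescopings and the $y$-substitution, and checking the degenerate case $n=0$, where $\HH 3 m {-1}=1$ must match $\varphi_{1},\varphi_{2},\varphi_{3}$ read from~\eqref{eq:fibdef}. The one ingredient that is not free is the closed form~\eqref{eq:triangalt} itself; if one wants a self-contained argument rather than quoting it, the additional work is a direct check that its right-hand side solves the recurrence~\eqref{eq:recVntri} with $V_{0}=0$, a short but slightly tedious rational-function identity, and this is the only place where some genuine algebra is unavoidable. Once the proposition is established, substituting the three determinant formulas into Corollary~\ref{cor:Vi} returns~\eqref{eq:triangalt}, now as a consequence of the combinatorial product identity of Theorem~\ref{thm:det} — which is precisely the explanation of its form that was being sought.
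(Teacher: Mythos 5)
Your computation is correct: the two telescopings, the substitutions $xV=y/(1+y)^2$, $V=(1+y)^2/(1+y^2)$, $1-xV=(1+y+y^2)/(1+y)^2$, the exponent bookkeeping, and the $n=0$ base cases all check out. But your route is essentially the reverse of the paper's, and the difference is not cosmetic. You take the closed form \eqref{eq:triangalt} as an input and push it through the product formula of Theorem~\ref{thm:det}; the paper's whole purpose in Section~\ref{sec:eulertri} is to prove Proposition~\ref{prop:det3} \emph{without} assuming \eqref{eq:triangalt}, precisely so that \eqref{eq:triangalt} can afterwards be \emph{deduced} from Corollary~\ref{cor:Vi} --- that is the advertised ``combinatorial explanation'' of its form. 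To that end the paper never invokes the $V_j$ side of \eqref{eq:det} at all: it substitutes the explicit expressions \eqref{eq:Fktribis} and \eqref{eq:Fk1tribis} for the determinant entries $F_n$ and $F_n^{(1)}$ in terms of $V$ alone, reinterprets the determinants via LGV as NILP configurations on the graph augmented by the special weighted edges \eqref{eq:specialedges}, and proves by a path-splitting bijection that the normalized determinants $T_n$ of \eqref{eq:Tdef} satisfy $T_{n+3}=(1-xV)T_{n+1}-xVT_n$ with $T_1=T_2=1$, $T_3=1-xV$, hence equal $\varphi_n(xV)$. Your argument is shorter and is a perfectly valid verification of the proposition as a standalone identity, provided you actually complete the one step you defer: checking that the right-hand side of \eqref{eq:triangalt} solves \eqref{eq:recVntri} with $V_0=0$. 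But note that this guess-and-check is exactly the non-combinatorial input the paper set out to eliminate, so your closing remark --- that the proposition then ``returns'' \eqref{eq:triangalt} via Corollary~\ref{cor:Vi} --- becomes circular as a derivation of \eqref{eq:triangalt}: you would be recovering a formula you already assumed. In short, your proof buys brevity at the cost of the result's raison d'\^etre; the paper's NILP recurrence buys independence from \eqref{eq:triangalt}, which is what makes the application meaningful.
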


\vspace{0.5cm}

\section{$p$-paths and multicontinued fractions}
\label{sec:inversion}

We consider multivariate generating functions for $p$-paths with, for
all $i\geq 1$, a weight $V_i$ per fall starting from a height $i$.
All results stated in this section will hold with $(V_i)_{i \geq 1}$ a
sequence of formal variables (i.e. its definition of Section
\ref{sec:mainresults} in terms of constellations is not
needed). Recall that, for $n,r \geq 0$, $F_n^{(r)}$ is defined as the
generating function of $p$-paths starting from $(-r,r)$ and ending at
$(np,0)$. Let us add an extra weight $t$ per rise and sum over all
lengths, to obtain the generating functions
\begin{equation}
  \label{eq:ftdef}
  F^{(r)}(t;V_1,V_2,\ldots) := \sum_{n \geq 0} F_n^{(r)} t^n.
\end{equation}

By elementary recursive decompositions of $p$-paths, we obtain the
recursive equations
\begin{equation}
  \label{eq:ftrel2}
  F^{(r)}(t;V_1,V_2,\ldots) = 
  \begin{cases}
    1 + t\, F^{(p-1)}(t;V_1,V_2,\ldots) & \text{if $r=0$} \\
    V_r\, F^{(0)}(t;V_{r+1},V_{r+2},\ldots) \, F^{(r-1)}(t;V_1,V_2,\ldots)
    & \text{if $r \geq 1$}.
  \end{cases}
\end{equation}
(For $r=0$ we remove the first rise. For $r \geq 1$ we perform
first-passage decomposition at height $r-1$.) We easily deduce
that
\begin{equation}
  \label{eq:fn0close}
  F^{(0)}(t;V_1,V_2,\ldots) = \frac{1}{
    1 - t \prod_{i=1}^{p-1} V_i F_n^{(0)}(t;V_{i+1},V_{i+2},\ldots)}
\end{equation}
By iterating this relation, we find that $F^{(0)}(t;V_1,V_2,\ldots)$
is equal to the multicontinued fraction on the r.h.s. of
\eqref{eq:multicont}. This shows that Theorem \ref{thm:constpath}
(established in Section \ref{sec:constellations}) implies Theorem
\ref{thm:multicont}.

\begin{figure}[htbp]
  \centering
  \includegraphics[width=\textwidth]{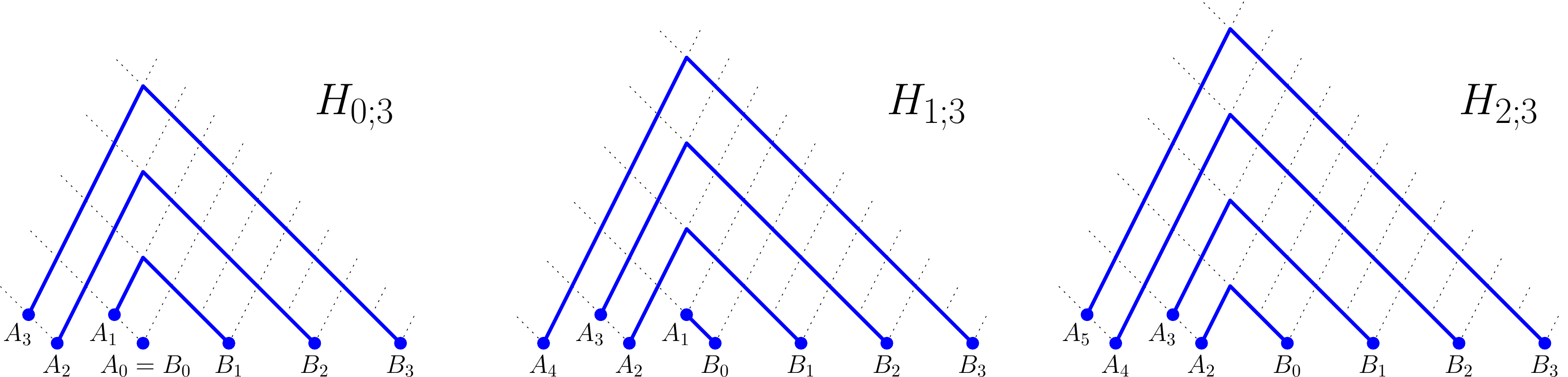}
  \caption{The unique configurations of non-intersecting lattice paths
    contributing respectively to $\HH 3 0 3$, $\HH 3 1 3$ and $\HH 3 2
    3$ for $p=3$.}
  \label{fig:Inversion2}
\end{figure}

We now turn to the proof of Theorem \ref{thm:det}, which is an
application of the celebrated LGV
lemma~\cite[]{Lindstrom,GesVie89}. We consider the weighted acyclic
directed planar graph whose vertices are the $(k,l) \in \integers
\times \naturals$ such that $k+l \equiv 0 \mod p $, and whose edges
are the rises $(k,l) \to (k+1,l+p-1)$, weighted $1$, and the falls
$(k,l) \to (k+1,l-1)$, weighted $V_l$. For $n \in \naturals$, let
$q_n$ and $r_n$ be respectively the quotient and the remainder in the
Euclidean division of $k$ by $p-1$, and let
\begin{equation}
  \label{eq:soursink}
  A_n := (-n - q_n,r_n) = (- p q_n - r_n, r_n), \qquad B_n:=(np,0).
\end{equation}
The generating function for paths from $A_i$ to $B_j$ ($i,j \geq 0$)
is nothing but $F_{q_i+j}^{(r_i)}$.

Theorem \ref{thm:det} then results from the LGV lemma and the
following proposition, illustrated on Fig.\ref{fig:Inversion2}.
\begin{prop}
  For $m \in \ldbrack 0,p-1 \rdbrack$, there is a unique configuration
  of non-intersecting lattice paths connecting the \emph{sources}
  $A_m,A_{m+1},\ldots,A_{m+n}$ to the \emph{sinks} $B_0,B_1,\ldots,B_n$: for
  $i \in \ldbrack 0,n \rdbrack$, the source $A_{m+i}$ is connected to
  the source $B_i$ via the highest possible path, passing through
  $(-m,m+ip)$. The weight of this configuration is $\prod_{i=0}^n
  \prod_{j=1}^{i p + m} V_j$.
\end{prop}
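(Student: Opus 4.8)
The plan is to prove the statement directly and combinatorially, without invoking the LGV lemma (once it is proved, the theorem follows as indicated in the text). First I would record a preliminary remark, used repeatedly: every edge increases the abscissa by exactly $1$, so each path is parametrized by its abscissa, and a path joining a vertex $(k,l)$ to a vertex $(k',l')$ has $a$ rises and $b$ falls with $a+b=k'-k$ and $(p-1)a-b=l'-l$. Applying this to $A_{m+i}\to(-m,m+ip)$ gives $a=i+q_{m+i}$, $b=0$ (using $m+i-r_{m+i}=(p-1)q_{m+i}$), while $(-m,m+ip)\to B_i=(ip,0)$ gives $a=0$, $b=ip+m$; since $-m+(m+ip)\equiv 0\bmod p$, the point $(-m,m+ip)$ is a vertex of the graph, so the path $P_i$ that rises from $A_{m+i}$ up to $(-m,m+ip)$ and then falls down to $B_i$ is well defined, and it is plainly the highest path from $A_{m+i}$ to $B_i$. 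Next I would check that $\{P_i\}_{0\le i\le n}$ is non-intersecting: for $i<i'$ the two paths have equal local slope ($p-1$ on their rising parts, which both terminate at abscissa $-m$, and $-1$ on their falling parts, which both start at abscissa $-m$), so their vertical gap is constant on the whole range of common abscissas and equals $(i'-i)p\ge p>0$ at abscissa $-m$; hence they are vertex-disjoint.

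For uniqueness, which is the crux, I would first establish an order-preservation lemma: two vertex-disjoint paths keep the same vertical order at all common abscissas. Between consecutive abscissas the order could change only if the two heights became equal (impossible, the paths being vertex-disjoint) or by a swap of the two heights, which would force the two heights at one abscissa to differ by an amount in $\ldbrack 1,p-1\rdbrack$; but at a fixed abscissa $k$ all vertices have height $\equiv -k\bmod p$, so that difference is a nonzero multiple of $p$ — a contradiction. Now let $\{Q_i\}_{0\le i\le n}$ be any non-intersecting configuration, with $Q_i$ ending at $B_i$ (so the sources $A_m,\dots,A_{m+n}$ are used bijectively). Since every source has abscissa $\le -m$ and every sink has abscissa $\ge 0\ge -m$, each $Q_i$ meets the vertical line of abscissa $-m$, at a vertex $(-m,m+pl_i)$ with $l_i\ge 0$ — this is where the hypothesis $m\le p-1$ enters, guaranteeing that $m$ is the least nonnegative height $\equiv m\bmod p$. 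For $i<i'$: at abscissa $ip$ the path $Q_i$ sits at height $0$ (its endpoint $B_i$), whereas $Q_{i'}$ sits at height $\ge 1$ (it cannot occupy the vertex $(ip,0)=B_i$ of $Q_i$), so the lemma gives $Q_i$ below $Q_{i'}$, hence $l_i<l_{i'}$. The step-count relation applied to $(-m,m+pl_i)\to B_i$ gives $i-l_i$ rises, so $l_i\le i$; with $0\le l_0<l_1<\dots<l_n$ this forces $l_i=i$. Then the part of $Q_i$ from $(-m,m+ip)$ to $B_i$ has no rise, so it is the unique column of $ip+m$ falls. Likewise, writing $A_{k_i}$ for the source of $Q_i$, the step-count relation applied to $A_{k_i}\to(-m,m+ip)$ gives $k_i-m-i$ falls, so $k_i\ge m+i$; since $i\mapsto k_i$ is a bijection from $\ldbrack 0,n\rdbrack$ onto $\ldbrack m,m+n\rdbrack$, this forces $k_i=m+i$, and then the part of $Q_i$ from $A_{m+i}$ to $(-m,m+ip)$ has no fall, hence is the unique run of $i+q_{m+i}$ rises. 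Therefore $Q_i=P_i$ for all $i$.

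Finally, in $P_i$ the rises carry weight $1$ and the $ip+m$ falls start from the heights $ip+m,ip+m-1,\dots,1$, so $P_i$ has weight $\prod_{j=1}^{ip+m}V_j$, and multiplying over $i$ yields $\prod_{i=0}^{n}\prod_{j=1}^{ip+m}V_j$, as claimed. I expect the uniqueness part to be the main obstacle: the points to get right are that the vertical line of abscissa $-m$ is precisely where the constraints become tight (the role of $m\le p-1$), that the order-preservation lemma is available there, and the two applications of the elementary fact that a self-map $\pi$ of $\ldbrack 0,n\rdbrack$ with $\pi(i)\ge i$ for all $i$ must be the identity — once to the heights $l_i$ at abscissa $-m$, once to the source indices $k_i$.
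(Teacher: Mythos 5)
Your proof is correct and complete; the paper itself leaves this proposition to the reader, and your argument (explicit construction of the highest paths, the mod-$p$ order-preservation lemma for vertex-disjoint paths, and the two applications of the step-count/bijection argument at abscissa $-m$ to pin down both the crossing heights $l_i=i$ and the source assignment $k_i=m+i$) is exactly the intended one, as suggested by the paper's Figure illustrating these highest paths. The weight computation $\prod_{j=1}^{ip+m}V_j$ per path is also correct, so nothing is missing.
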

The proof is left to the reader. For $p=2$ we recover the known
combinatorial interpretation of Hankel determinants, see e.g.
\cite{ViennotLACIM}.

\section{Constellations, $p$-paths, and the slice decomposition}
\label{sec:constellations}

In this section, we establish Theorem~\ref{thm:constpath} and related
results.

Let us start with some definitions and notations. For $n \geq 1$, let
$\mathcal{F}_n$ (resp. $\mathcal{F}^\bullet_n$) be the set of rooted
(resp. pointed rooted) $p$-constellations with root degree $n p$. The
generating function of $\mathcal{F}_n$ is $F_n$ as defined in Section
\ref{sec:mainresults}. Similarly, let $\mathcal{V}_i$ ($i \geq 1$) be
the set of pointed rooted $p$-constellations whose root edge is of
type $j \to j-1$ with $j \leq i$, to which we add a conventional
``empty map'' with weight $1$. The generating function of
$\mathcal{V}_i$ is $V_i$.

A \emph{constellated} path is a $p$-path such that, for all $i \geq
1$, an element of $\mathcal{V}_i$ is associated with each fall from
height $i$. The weight of a constellated path is the product of the
weights of its associated constellations. We shall consider
constellated excursions and bridges, where a \emph{$p$-bridge} of
length $np$ is a $p$-path starting from $(0,j)$ and ending at $(np,j)$
from some $j \in \naturals$. Theorem~\ref{thm:constpath} then follows from:
\begin{prop}\label{prop:slice}

  There is a weight-preserving bijection between $\mathcal{F}_n$ and
  the set of constellated excursions of length $np$. More
  generally, there is a weight-preserving bijection between
  $\mathcal{F}^\bullet_n \times \naturals$ and the set of constellated
  bridges of length $np$.
\end{prop}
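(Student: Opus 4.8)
The plan is to establish Proposition~\ref{prop:slice} by a \emph{slice decomposition} of pointed rooted constellations, in the spirit of \cite{BDG04} and \cite{BouGui10}. First I would explain the basic building block, the ``slice''. Given a pointed rooted constellation, orient edges canonically (white face on the right) and equip each vertex with its type $j$ (the minimal length of an oriented path to the pointed vertex). A key preliminary observation is that along any oriented edge the type drops by at most $1$ and, because oriented cycles have length a multiple of $p$, rises by at most $p-1$; moreover each oriented face (black of degree $p$, white of degree $kp$) has a distinguished corner where the type is locally minimal, and walking around the face the type profile is constrained. I would use this to cut the map: starting from the root edge (of type $j\to j-1$, with $j\le i$ for an element of $\mathcal V_i$), follow the canonical ``leftmost geodesic'' or confluent path toward the pointed vertex; this carves the constellation into elementary slices, each slice being itself a smaller pointed rooted constellation, and the sequence of slice root-types traces out exactly a $p$-path. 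The conventional ``empty map'' of weight $1$ added to each $\mathcal V_i$ accounts for a degenerate slice (a single vertex), which is why each fall in the $p$-path carries a factor from $\mathcal V_i$ rather than $\mathcal V_i$ minus the empty map.

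Concretely, the bijection for $\mathcal F_n$ would go as follows. A rooted constellation of root degree $np$ is the same as a pointed rooted constellation whose root edge has type $j\to j-1$ with $j=1$, i.e. where the pointed vertex \emph{is} the head of the root edge --- wait, that is not quite $\mathcal F_n$; rather $\mathcal F_n$ is rooted with root degree $np$ and no pointing, and the claim is it biject with constellated excursions. I would obtain this by reading the white root face: its $np$ corners, read in cyclic order starting from the root corner, are visited by a closed walk whose type increments form a $p$-excursion of length $np$ (rises $+{(p-1)}$ when crossing a black face, falls $-1$ otherwise, suitably normalized). At each fall, the portion of the map ``hanging off'' that corner --- the sub-constellation bounded by the two geodesics emanating from the endpoints of the corresponding edge --- is a pointed rooted constellation whose root edge has type $i\to i-1$ with $i$ the height of the fall, hence an element of $\mathcal V_i$ (or the empty slice). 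This decomposition is manifestly weight-preserving: every white face of the original constellation is the white root face of exactly one slice (or is the global white root face, which carries no weight by convention), so the product of slice weights equals the weight of the constellation. The inverse map glues the slices back along the excursion, and one checks the type function is recovered.

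For the ``more general'' statement --- a weight-preserving bijection between $\mathcal F^\bullet_n\times\naturals$ and constellated \emph{bridges} of length $np$ --- the point is that a $p$-bridge from $(0,j)$ to $(np,j)$ is the same data as a choice of $j\in\naturals$ together with a $p$-path that stays at height $\ge 0$ relative to its endpoints; the extra $\naturals$ factor records the type $j$ of the white root face's starting corner, which is now free because the constellation is genuinely pointed (the pointed vertex need not be near the root). The same slice decomposition applies verbatim, cutting along geodesics toward the pointed vertex; the white root face corners now trace a bridge rather than an excursion precisely because there is no constraint forcing the root corner to sit at minimal type. I would note that the excursion case is the specialization where one additionally forgets the pointing in a canonical way (equivalently $j$ is determined), which is consistent with $V_1$ being the generating function of rooted constellations.

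The main obstacle, and the step I would spend the most care on, is proving that the slice decomposition is \emph{well-defined and bijective}: namely that the geodesics used to cut do not cross each other or themselves, that the pieces obtained are again bona fide pointed rooted constellations of the claimed root type, and that gluing is inverse to cutting. Concretely one must show (i) two geodesics toward the pointed vertex issued from distinct corners either are disjoint or merge and stay merged (a confluence property), which rests on the max-drop-$1$ property of types along oriented edges; (ii) each slice's boundary, read off, is consistent with the local face-degree conditions, so that the slice is a constellation with the right colored-face structure; and (iii) the height profile along the white root face boundary is exactly a $p$-path, which follows from the face-degree parity (black faces contribute a rise of $p-1$, each unit of white-face degree a fall of $1$, and closure of the walk gives the excursion/bridge condition). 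Once these structural lemmas are in place the weight-preservation is immediate and the bijection follows, and combined with Theorem~\ref{thm:constpath}'s path-counting statement this yields the desired $F_n$.
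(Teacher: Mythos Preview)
Your overall plan—slice decomposition along leftmost geodesics to the pointed vertex, reading the type profile around the white root face as a $p$-path, and attaching a slice in $\mathcal{V}_i$ to each fall from height $i$—is exactly the paper's approach, and your checklist of structural lemmas (confluence of geodesics, that slices are bona fide constellations, that the type increments lie in $\{-1,p-1\}$) is the right one.

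There is, however, a genuine gap in your handling of the $\naturals$ factor in $\mathcal{F}^\bullet_n \times \naturals$. You write that it ``records the type $j$ of the white root face's starting corner, which is now free because the constellation is genuinely pointed''. But given $C \in \mathcal{F}^\bullet_n$, the type $j_0$ of the root corner is \emph{determined}, not free: it is the minimal oriented distance from $v_0$ to the pointed vertex. As described, then, your map goes from $\mathcal{F}^\bullet_n$ alone to constellated bridges, and it is not surjective: a constellated bridge in which every decoration is ``loose'' (each $C_s$ already lies in $\mathcal{V}_{i_s-1}$) is never hit.

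The correct role of the extra integer $\ell$ is a \emph{vertical shift}: from $(C,\ell)$ one forms the bridge at heights $(j_0+\ell,\, j_1+\ell,\ldots,\, j_{np}+\ell)$, attaching the \emph{same} slice $C_s$ to each fall. This is legitimate precisely because of the nesting $\mathcal{V}_1 \subset \mathcal{V}_2 \subset \cdots$: the root edge of $C_s$ has type $j'_s \to j'_s-1$ with $j'_s \le j_s$, hence $C_s \in \mathcal{V}_{j_s+\ell}$ for every $\ell \ge 0$. Recovering $\ell$ from a given constellated bridge requires one further observation you do not mention: since the pointed vertex of $C$ lies in at least one slice, there exists some $s$ with $j'_s = j_s$, so the unshifted ($\ell=0$) bridge has at least one tight decoration and cannot be translated further down. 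Thus $\ell$ is characterised as the maximal downward shift that keeps the decorated path a valid constellated bridge, and bijectivity follows. Without this mechanism the bijection with $\mathcal{F}^\bullet_n \times \naturals$ does not go through.
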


This bijection is a byproduct of the correspondence between
constellations and mobiles introduced in \cite{BDG04}. Here we provide
a direct construction, the so-called \emph{slice decomposition}, extending
the one given for $p=2$ by \cite{BouGui10}.

\begin{figure}[t]
\centering
\includegraphics{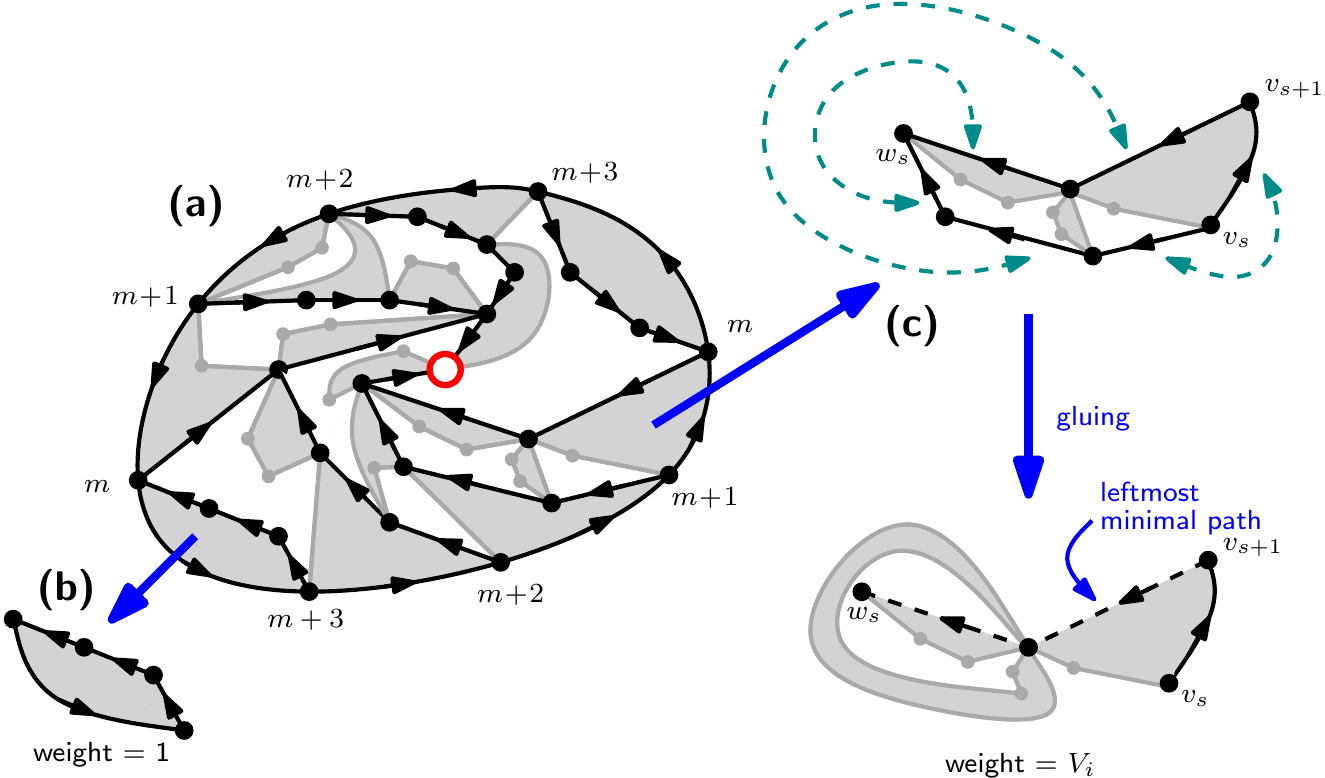}
\caption{\label{fig:slices}
The decomposition of a rooted pointed $4$-constellation along leftmost
minimal path into slices (a). Slice corresponding to edge $(m,m+3)$ are reduced
to a black edge (b). Identifying the two sides of the boundary of a slice of an
edge $(m,m-1)$ yields to a
pointed-rooted constellation (c)}
\end{figure}

\newcommand\tpv{the pointed vertex}

We start from a constellation $C \in \mathcal{F}^\bullet_n$, and a
$\ell \in \naturals$ (whose role will be discussed at the very
end). The white root face of $C$ forms a directed cycle which we
denote by $(v_0,v_1,\ldots,v_{np-1},v_{np}=v_0)$ where, say, $v_0$ is
the endpoint of the root edge (hence $v_{np-1}$ is its origin).

Let $j_s$ be the type of $v_s$ (recall that the type of a vertex $v$
is the minimal length of an oriented path from $v$ to \tpv), and let
\begin{equation}
  \label{eq:P}
  P := ((0,j_0+\ell),(1,j_1+\ell),\ldots,(np,j_{np}+\ell)).
\end{equation}
We claim that $P$ is a $p$-bridge i.e. for all $s \in \ldbrack 0,np-1
\rdbrack$,
\begin{equation}
  \label{eq:typevar}
  j_{s+1} - j_s \in \{ -1 , p-1 \}.
\end{equation}
Indeed, since $(v_s,v_{s+1})$ is an oriented edge, we have $j_s \leq
j_{s+1}+1$ by the definition of type. Furthermore, the black face
incident to $(v_s,v_{s+1})$ forms an oriented path of length $p-1$
from $v_{s+1}$ to $v_s$, hence $j_{s+1} \leq j_s+p-1$. Finally, since
$p$ divides the length of any oriented cycle on $f$, we have $j_s
\equiv j_{s+1} + 1 \mod p$, which concludes the proof of our claim. In
particular, when $C$ is a rooted constellation that we point at the
endpoint of the root edge, and $\ell=0$, then $P$ is a $p$-excursion.

Let us now explain how $P$ is constellated. Assume that the
constellation $C$ is embedded in the plane with the white root face as
outer face, and consider, for each $s$, the \emph{leftmost} minimal
(oriented) path from $v_s$ to \tpv, where minimal refers to the length of the
path. This family of paths leads to a
decomposition of $C$ into connected components that we call
\emph{slices}, see Fig.\ref{fig:slices}(a). A slice is associated
with each edge $(v_s,v_{s+1})$ incident to the white root face (thus
with each step of $P$). More precisely, it is delimited by
$(v_s,v_{s+1})$ and the two paths starting from $v_s$ and $v_{s+1}$
(in general, those two paths merge before reaching \tpv, and we
remove their common part).

Observe that when $j_{s+1}=j_s+p-1$, the slice is reduced to a single
black face (as the path starting from $v_{s+1}$ passes through $v_s$
after circumventing this face) with weight $1$, see
Fig.~\ref{fig:slices} (b).

Let us now assume that $j_{s+1}=j_s-1$: we claim that the slice
corresponds to an element of $\mathcal{V}_{j_s+\ell}$. Two cases may
occur. If the path starting from $v_s$ passes through $v_{s+1}$, then
the slice is empty, and corresponds to the empty map in
$\mathcal{V}_{j_s+\ell}$.  Otherwise, let $w_s$ be the vertex where
the two paths starting from $v_s$ and $v_{s+1}$ merge. The boundary of
the slice is therefore made of two (non-empty) oriented paths from
$v_s$ to $w_s$ that do not meet except at their extremities. By
construction those two paths have the same length: we may identify
pairwise their edges, see Fig.\ref{fig:slices}(c). This
identification preserve the degrees of the white faces, hence the
weights, and the orientations guarantee that the resulting map is a
constellation $C_s$, which we root at $(v_s,v_{s+1})$ and point at
$w_s$. By construction, the identified paths form in $C_s$ the
leftmost minimal path starting with the root edge and ending at
\tpv. Thus, in $C_s$ the root edge is of type $j'_s \to j'_s-1$, with
$j'_s \leq j_s \leq j_s+\ell$, so that $C_s \in
\mathcal{V}_{j_s+\ell}$ as claimed. Note that since the pointed vertex
of $C$ is incident to at least one white face, there is at least one
$s$ such that $j'_s=j_s$ i.e. $C_s \in \mathcal{V}_{j_s} \setminus
\mathcal{V}_{j_s-1}$. This remark allows to characterize $\ell$ as the
largest integer such that $P - (0,\ell)$ is still a constellated
bridge.

In conclusion, the slice decomposition yields a constellated
bridge. By following the steps in reverse order, it is clear how to
reconstruct a pointed rooted constellation from a constellated
bridge. By the above remark we also recover the integer $\ell$ and we may check
that the correspondence is one-to-one (note that
constellated bridges differing by a height shift yield the same
constellation, but different values of $\ell$). 
Thereby we prove
Proposition~\ref{prop:slice} and Theorem~\ref{thm:constpath}.

Let us now mention other byproducts of our construction.

\begin{prop} \label{prop:intFni} For all $n \geq 0$ and $r \in
  \ldbrack 0,p-1 \rdbrack$, $F_n^{(r)}$ is the generating function of
  rooted $p$-constellations with root degree $(n+1) p$ such that, if
  we denote by $(v_0,v_1,\ldots,v_{(n+1)p}=v_0)$ the directed cycle
  corresponding to the white root face ($v_0$ being the endpoint of
  the root edge), then all the vertices $v_1,v_2,\ldots,v_{p-1-r}$ are
  bivalent.
\end{prop}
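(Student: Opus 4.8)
The plan is to extract the combinatorial meaning of $F_n^{(r)}$ from the path model and then transport it through the slice decomposition of Section~\ref{sec:constellations}. Recall that $F_n^{(r)}$ is the sum of weights of constellated $p$-paths from $(-r,r)$ to $(np,0)$. I would first observe that such a path, prepended with the $r$ successive falls that bring $(0,0)$ down to $(-r,r)$ read backwards --- equivalently, by reversing the whole picture --- can be completed to a constellated excursion of length $(n+1)p$ whose first $p-1-r$ steps are \emph{falls} weighted $1$, followed by the rise $(1,p-1)$ realizing the first ascent, after which the path agrees with the given one. Concretely: an excursion of length $(n+1)p$ starts with a rise $(0,0)\to(1,p-1)$; if instead we prescribe that the path descends to height $r$ before making its first rise, the initial portion is forced (it is $p-1-r$ falls from heights $0,-1,\dots$ down to $-(p-2-r)$ wait --- here one must be careful, since the path lives on $\integers\times\naturals$). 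Rather than chase signs, the cleanest route is: a constellated excursion of length $(n+1)p$ in which $v_1,\dots,v_{p-1-r}$ are forced to be the endpoints of \emph{falls of weight $1$} is in trivial bijection (delete those $p-1-r$ initial falls and the ensuing rise, and shift) with a constellated path from $(-r,r)$ to $(np,0)$. So Theorem~\ref{thm:constpath} applied after this manipulation gives the path-side identity; what remains is to read off, via Proposition~\ref{prop:slice}, what the condition ``$v_1,\dots,v_{p-1-r}$ are endpoints of weight-$1$ falls'' means for the corresponding constellation.

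Here I would invoke the slice decomposition directly. Take a rooted $p$-constellation $C$ with root degree $(n+1)p$, pointed at the endpoint $v_0$ of the root edge, and $\ell=0$; then the associated $p$-path $P$ of \eqref{eq:P} is a constellated excursion of length $(n+1)p$, and by the analysis in Section~\ref{sec:constellations} the step $(s,j_s)\to(s+1,j_{s+1})$ is a rise exactly when $j_{s+1}=j_s+p-1$, i.e. when the slice attached to the edge $(v_s,v_{s+1})$ is reduced to a single black face, which forces $v_s$ and $v_{s+1}$ to be connected through that black face. The first step is always a rise (as $j_0=0$). The condition that the next $p-1-r$ steps be \emph{falls} is the condition $j_1=p-1$, $j_2=p-2,\dots,j_{p-1-r}=r+1$, i.e. that the types of $v_1,\dots,v_{p-1-r}$ decrease by one at each step. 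I claim this is equivalent to each of $v_1,\dots,v_{p-1-r}$ being \emph{bivalent} in $C$: indeed $v_1,\dots,v_{p-1}$ all lie on the boundary of the black root face (the black face incident to the root edge, which has degree $p$), so each is incident to that black face and to the white root face; being bivalent means being incident to \emph{no other} face, which is exactly the statement that the slice at $(v_{s-1},v_s)$ and at $(v_s,v_{s+1})$ are degenerate in the appropriate way --- and unwinding the leftmost-minimal-path construction, bivalence of $v_s$ for $s\le p-1$ forces the leftmost minimal path from $v_s$ to $v_0$ to run along the black root face, hence $j_s=j_{s-1}-1$ as long as we are still in the bivalent stretch. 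Conversely if $j_1=p-1,\dots,j_{p-1-r}=r+1$, then for each such $s$ the slice at $(v_s,v_{s+1})$ is the non-trivial fall slice, and tracing the two boundary paths of that slice against the black root face shows $v_s$ has no incident white face other than the root face, so $v_s$ is bivalent.

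The main obstacle, and the step I would spend the most care on, is precisely this last equivalence: ``$v_1,\dots,v_{p-1-r}$ bivalent'' $\iff$ ``$P$ begins with a rise then $p-1-r$ falls''. One direction (bivalence $\Rightarrow$ type pattern) is safe because a bivalent vertex $v_s$ with $1\le s\le p-1$ sits on the black root face and on nothing else, so the \emph{only} oriented edge out of $v_s$ other than along the white root face is along the black root face toward $v_{s-1}$, which pins down the leftmost minimal path and hence forces $j_{s+1}=j_s-1$ whenever $v_{s}$ --- wait, one must track whether it is $v_s$ or $v_{s+1}$ whose bivalence forces the fall; the bookkeeping between ``step $s$'' and ``vertex $v_s$ vs $v_{s+1}$'' is where an off-by-one error is easy to make, so I would fix conventions explicitly and check the boundary case $r=p-1$ (no constraint, $F_n^{(p-1)}=F_{n+1}$, consistent with the last sentence of the paragraph preceding \eqref{eq:firstfewfn1}) and $r=0$ ($F_n^{(0)}=F_n$, all of $v_1,\dots,v_{p-1}$ bivalent, matching the fact that an excursion forces $j_1=p-1,\dots$ only when the first slice chain is fully degenerate). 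The converse direction (type pattern $\Rightarrow$ bivalence) requires arguing that the fall slices attached to $(v_1,v_2),\dots,(v_{p-1-r},v_{p-1-r+1})$ are all the degenerate ``empty-map'' slices; this follows because the leftmost minimal path from $v_0$ into $C$ enters along the black root face, so each $v_s$ in the stretch has its leftmost minimal path passing through $v_{s-1}$, making the slice at $(v_{s-1},v_s)$ empty and forcing $v_s$ to carry no white face besides the root face. Once conventions are fixed this is routine, so I would state the proposition as ``proof left to the reader'' in the same spirit as the LGV proposition, but it is worth recording the above reduction to Theorem~\ref{thm:constpath} plus the local slice analysis.
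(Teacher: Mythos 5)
Your overall plan coincides with the paper's: apply the slice decomposition of Section~\ref{sec:constellations} with the pointed vertex taken to be $v_0$ and $\ell=0$, observe that the resulting constellated excursion of length $(n+1)p$ begins with a rise followed by $p-1-r$ falls carrying empty (weight-$1$) slices precisely when $v_1,\ldots,v_{p-1-r}$ are bivalent, and delete those $p-r$ trivial steps to land on a constellated path from $(-r,r)$ to $(np,0)$, i.e.\ on $F_n^{(r)}$ via Theorem~\ref{thm:constpath}. That reduction is exactly right, and your sanity checks at $r=0$ and $r=p-1$ are the appropriate ones.

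However, the justification you offer for the central equivalence fails as written, in three specific places. First, $v_1,\ldots,v_{p-1}$ do \emph{not} in general lie on the black root face: that face is the one incident to the root edge $(v_{(n+1)p-1},v_0)$, whereas the face relevant here is the black face to the left of $(v_0,v_1)$ (the single-black-face slice of the initial rise); and the fact that $v_1,\ldots,v_{p-1-r}$ all lie on that one black face is a \emph{consequence} of their bivalence, not an a priori fact you may assume. Second, your forced-path argument points the wrong way: a bivalent $v_s$ ($s\geq 1$) has a unique \emph{outgoing} edge, namely $(v_s,v_{s+1})$ — the edge $(v_{s-1},v_s)$ is oriented into $v_s$, so there is no oriented edge ``toward $v_{s-1}$.'' The correct (and shorter) argument is that every oriented path from $v_s$ to the pointed vertex must start with $(v_s,v_{s+1})$, which simultaneously gives $j_{s+1}=j_s-1$ and the emptiness of the slice at $(v_s,v_{s+1})$; no black face is needed for this direction. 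Third, the converse is again reversed: minimal oriented paths decrease the type by one at each step, so the leftmost minimal path from $v_s$ passes through $v_{s+1}$ (type $j_s-1$), never through $v_{s-1}$ (type $j_s+1$), and the empty slice to invoke is the one at $(v_s,v_{s+1})$, not at $(v_{s-1},v_s)$; bivalence of $v_s$ then follows because the only two slices meeting $v_s$ — the one at $(v_{s-1},v_s)$, which is a single black face if $s=1$ and empty if $s\geq 2$, and the empty one at $(v_s,v_{s+1})$ — contribute no edge at $v_s$ beyond $(v_{s-1},v_s)$ and $(v_s,v_{s+1})$. Finally, the off-by-one you feared did occur: the condition is $j_s=p-s$ for all $s=1,\ldots,p-r$ (the $p-1-r$ falls must bring the path down to height $r$), not merely for $s\leq p-1-r$. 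With these corrections the argument matches the paper's proof.
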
 
\begin{proof}
  Apply the slice decomposition with $v_0$ as pointed vertex and
  $\ell=0$. The leftmost minimal path from $v_1$ to $v_0$ visits
  successively $v_2,\ldots,v_{p-r}$, so that the constellated bridge
  starts with one rise followed by $p-1-r$ falls corresponding to
  empty slices. Removing those $p-r$ trivial steps, we obtain a
  constellated path with $np+r$ steps going from height $r$ to height
  $0$.
\end{proof}

\begin{prop}[\cite{BDG04,diFraIntegrable}]
  Let $F_n^{(i-1;i)}$ be the generating function for constellated
  paths from $(0,i-1)$ to $(np-1,i)$. We have:
  \begin{equation} \label{eq:recVn} V_i = 1 + V_i \sum_{n=1}^\infty x_n
    F_{n}^{(i-1;i)}
  \end{equation}
\end{prop}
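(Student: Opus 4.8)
The plan is to deduce \eqref{eq:recVn} --- which is the ``mobile recurrence'' of \cite{BDG04,diFraIntegrable} --- as one more byproduct of the slice decomposition of Section~\ref{sec:constellations}. First I would read off the right-hand side combinatorially: the term $1$ is the conventional empty map in $\mathcal{V}_i$, so it is enough to produce a weight-preserving bijection between the nonempty elements of $\mathcal{V}_i$ and the triples $(C',n,P_0)$ in which $n\geq1$, $C'\in\mathcal{V}_i$, and $P_0$ is a constellated path from $(0,i-1)$ to $(np-1,i)$; here the choice of $n$ is recorded by the factor $x_n$, the choice of $P_0$ by $F_n^{(i-1;i)}$, and the choice of $C'$ by the outer factor $V_i$.

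Then I would describe the forward map. Let $C\in\mathcal{V}_i$ be nonempty, let $j$ be the type of the origin of its root edge --- so the root-edge endpoint has type $j-1$ and $1\leq j\leq i$ --- and let $np$ be the degree of its white root face. Run the slice decomposition on $C$ (which is already a pointed rooted $p$-constellation, so we use its own pointed vertex) with the shift $\ell:=i-j\geq0$. With the notation of Section~\ref{sec:constellations}, writing the white root face cycle as $(v_0,\dots,v_{np}=v_0)$ with $v_0$ the root-edge endpoint, one has $j_0=j_{np}=j-1$ and $j_{np-1}=j$, so the resulting constellated bridge $P=((0,j_0+\ell),\dots,(np,j_{np}+\ell))$ sits at level $i-1$, its last step is a fall from height $i$ to height $i-1$, and that fall carries a slice $C'\in\mathcal{V}_{j_{np-1}+\ell}=\mathcal{V}_i$ (the slice of the root edge). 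Deleting this last step leaves a $p$-path from $(0,i-1)$ to $(np-1,i)$ each of whose falls from a height $h$ carries a slice in $\mathcal{V}_h$, that is, a constellated path $P_0$ counted by $F_n^{(i-1;i)}$. Since the slice decomposition is weight-preserving, the weight of $C$ equals $x_n$ times the weight of $P$ (the $x_n$ of the white root face being the only weight of $C$ not recorded by $P$), and the weight of $P$ is the product of its slice weights, namely $\mathrm{wt}(C')$ times $\mathrm{wt}(P_0)$; hence $\mathrm{wt}(C)=x_n\,\mathrm{wt}(C')\,\mathrm{wt}(P_0)$, and $C\mapsto(C',n,P_0)$ is weight-preserving.

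For the inverse, given $(C',n,P_0)$ I would append to $P_0$ (shifted by one) a fall from height $i$ to height $i-1$ decorated by $C'$, obtaining a constellated bridge of length $np$ at level $i-1$; by Proposition~\ref{prop:slice} it is the image of a unique pair $(C,\ell^\ast)$ with $C$ a pointed rooted $p$-constellation of root degree $np$. The bridge starts at height $i-1$ and ends with a fall to height $i-1$, so the root edge of the reconstructed $C$ has type $(i-\ell^\ast)\to(i-\ell^\ast-1)$ with $i-\ell^\ast\leq i$; hence $C\in\mathcal{V}_i$, and its root-origin type is $j=i-\ell^\ast$, so $\ell^\ast=i-j$ is exactly the shift used going forward. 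The two constructions are therefore mutually inverse, and summing weights over $n\geq1$ and over all $(C',P_0)$, using $F_n^{(i-1;i)}=\sum_{P_0}\mathrm{wt}(P_0)$ and $V_i=\sum_{C'\in\mathcal{V}_i}\mathrm{wt}(C')$, and adding back the empty map, gives \eqref{eq:recVn}. The step I expect to require the most care is the bookkeeping of the shift $\ell=i-j$: one has to check that feeding the slice decomposition this (generally non-canonical) $\ell$ still produces the bijection of Proposition~\ref{prop:slice}, and that this normalization --- placing the white-root-face cycle at level $i-1$ no matter what $j$ is --- is precisely what lets the leftover slice $C'$ range over all of $\mathcal{V}_i$ rather than only over $\mathcal{V}_j$, and what makes the right-hand side of \eqref{eq:recVn} depend on $i$ solely through the endpoints $(0,i-1)\to(np-1,i)$ of the constellated path.
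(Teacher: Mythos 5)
Your proof is correct and takes essentially the same route as the paper: the paper's (much terser) argument likewise applies the slice decomposition to a nonempty element of $\mathcal{V}_i$ with the shift $\ell=i-j$, reads off the factor $V_i$ from the final fall (the slice of the root edge, an element of $\mathcal{V}_{j_{np-1}+\ell}=\mathcal{V}_i$) and the factor $x_n$ from the white root face. Your write-up simply makes explicit the weight bookkeeping and the inverse construction that the paper leaves implicit.
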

\begin{proof}
  Apply the slice decomposition to a (non-empty) map in
  $\mathcal{V}_i$. Taking $\ell=i-j$, where $j \to j-1$ is the type of
  the root edge, we obtain a constellated bridge of arbitrary length,
  starting at height $i-1$ and ending with a fall from height $i$,
  thus the factor $V_i$. The weight $x_n$ accounts for the white
  root face.
\end{proof}

Note the recursive nature of \eqref{eq:recVn}: $F_{n}^{(i-1;i)}$ is a
polynomial in the $V_j$'s such that $|j-i|<(p-1)n$. In particular, for
$i \to \infty$, $V_i$ converges (in a suitable topology) to $V$ the
generating function of $\mathcal{V}=\bigcup_{i\geq 1} \mathcal{V}_i$,
and $F_{n}^{(i-1;i)} \to \binom{np-1}{n} V^{n(p-1)-1}$. Hence $V$
satisfies the recursive equation
\begin{equation}\label{eq:recV}
  V = 1 + \sum_{k=1}^\infty \binom{np-1}{n} x_n V^{n(p-1)}.
\end{equation}
By the Lagrange inversion formula, we may obtain an explicit
expression for the coefficients of $V$ \cite[p.131]{BouPhD}, and
recover the number of rooted constellations with a prescribed degree
distribution \cite[Theorem 2.3]{BouSch00}.

By extending the construction in \cite[Section 3.3]{BouGui10}, we may
express the generating function $F_n$ in terms of $V$ as follows.

\begin{prop}\label{prop:consquant}
  For $n\geq 0$, and $V$ the solution of \eqref{eq:recV}, we have
  \begin{equation}\label{eq:Fpk}
    F_{n} = \frac{1}{np+1}\binom{np+1}{n}V^{n(p-1)+1}-\sum_{k \geq 1}\left(\sum_{j =
        0} \frac{jp+1}{np+1}\binom{np+1}{n-j}\binom{kp-1}{k+j}\right)x_kV^{(k+n)(p-1)}.
  \end{equation}
\end{prop}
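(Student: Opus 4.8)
The plan is to derive \eqref{eq:Fpk} by combining the slice decomposition of Proposition~\ref{prop:slice} with a ``universal'' count of constellated excursions weighted uniformly by $V$. First I would record the baseline: if every fall of a $p$-excursion of length $np$ is given the \emph{same} weight $V$ (regardless of its starting height), then Theorem~\ref{thm:constpath} specializes to $\frac{1}{np+1}\binom{np+1}{n}V^{n(p-1)+1}$, since a $p$-excursion of length $np$ has exactly $n(p-1)+1$ falls and the number of such excursions is the $p$-ary Catalan number $\frac{1}{np+1}\binom{np+1}{n}$. (One extra factor of $V$ comes from the convention $F_0=1$ versus the per-excursion weight bookkeeping; this needs to be tracked carefully.) This gives the leading term of \eqref{eq:Fpk}, and the whole point is to express the \emph{correction} $F_n - \frac{1}{np+1}\binom{np+1}{n}V^{n(p-1)+1}$ combinatorially.

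The key idea, following \cite[Section 3.3]{BouGui10}, is that a genuine rooted $p$-constellation in $\mathcal{F}_n$ is \emph{not} an arbitrary product of copies of $V$ along the excursion: the slice attached to a fall from height $i$ must lie in $\mathcal{V}_i$, i.e.\ its root edge has type $j\to j-1$ with $j\le i$, which is a \emph{proper} subset of $\mathcal{V}=\bigcup_{i\ge1}\mathcal{V}_i$ only because of the height constraint. So I would set up an inclusion–exclusion (or rather a ``defect'' expansion): write the weight of each slice as $V$ minus the generating function of those elements of $\mathcal{V}$ whose root-edge type index exceeds the available height. Concretely, using Proposition~\ref{prop:intFni}/the recursion \eqref{eq:recVn}, the ``forbidden'' part of a slice at height $i$ — maps in $\mathcal{V}\setminus\mathcal{V}_i$ — has a white root face of some degree $kp$ contributing $x_k$, together with $k$ ``sub-excursions'' whose total number of sub-falls is $n(p-1)-1$ when the white root face has degree $kp$; each such sub-fall gets weight $V$. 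This is where the $\binom{kp-1}{k+j}$ factors enter: $\binom{kp-1}{k+j}$ counts the lattice-path configurations attached to a white face of degree $kp$ with an excess $j$ in the height (the shift that pushes the slice out of $\mathcal{V}_i$).

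Thus the main computation is: organize the excursion of length $np$ together with a single distinguished ``bad'' slice of white-root-face degree $kp$ that forces a downward height excursion by an excess parameter $j$, and everything else weighted by $V$. The combinatorial picture is a $p$-excursion of length $np$ with one marked fall, below which hangs a forbidden configuration; cutting there leaves a $p$-path from $(0,0)$ to $((n-j)p+\text{something},0)$ weighted by $V$'s, contributing the cycle-lemma factor $\frac{jp+1}{np+1}\binom{np+1}{n-j}$, times the internal count $\binom{kp-1}{k+j}V^{\cdots}$ of the forbidden slice, times $x_k$. Summing over $k\ge1$ and $j\ge0$ (with $j$ ranging over the values allowed by $n-j\ge0$) yields precisely the subtracted double sum in \eqref{eq:Fpk}, and the exponent $(k+n)(p-1)$ bookkeeps the $k(p-1)-1$ internal falls of the bad slice plus the $n(p-1)+1$ falls of the ambient excursion minus the $j(p-1)+\cdots$ falls removed by the cut — the exact exponent arithmetic must be checked, but it is forced.

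The main obstacle I expect is twofold: getting the cycle-lemma / Lagrange-inversion counting of $p$-paths with a fixed endpoint shift exactly right (the factor $\frac{jp+1}{np+1}\binom{np+1}{n-j}$ is a ``ballot-type'' refinement of the $p$-ary Catalan number, so I would prove it via the standard cyclic-shift argument on $p$-paths), and verifying that only a \emph{single} bad slice needs to be extracted — i.e.\ that the expansion terminates at first order. The latter holds because, once one records the white face responsible for the first height violation together with the full $V$-weighted environment, the remaining data is genuinely a product of unconstrained $\mathcal{V}$'s and unconstrained $p$-paths: any deeper violation is already accounted for inside the environment's own $V$'s. Making this ``first violation'' argument clean — essentially an encoding of $\mathcal{F}_n$ as (excursions with all slices in $\mathcal V$) minus (excursions with a marked first out-of-range slice) — is the crux, and it parallels exactly the $p=2$ argument of \cite[Section 3.3]{BouGui10}, which I would follow step by step.
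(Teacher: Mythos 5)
The paper itself offers no proof of this proposition (it only defers to an extension of \cite[Section 3.3]{BouGui10}), so I am judging your sketch on its own terms. You have correctly identified the right ingredients: the cycle-lemma counting of $p$-paths, the splitting $V = V_h + (V - V_h)$ with $V - V_h$ interpreted via the slice decomposition as a white face of degree $kp$ (weight $x_k$) carrying an attached path with height excess, and the reading of $\binom{kp-1}{k+j}$ as the number of unconstrained $p$-paths of length $kp-1$ with $k+j$ rises (hence displacement $jp+1$). Indeed the fall count $(np+1-n+j)+(kp-1-k-j)=(n+k)(p-1)$ matches the exponent of $V$ in the subtracted term, so the target of the would-be bijection is correctly identified. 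However, there is a concrete error at the very start: a $p$-excursion of length $np$ has $n$ rises and $n(p-1)$ falls, not $n(p-1)+1$, so the ``uniform-$V$'' baseline you describe is $\frac{1}{np+1}\binom{np+1}{n}V^{n(p-1)}$, off by a factor $V$ from the first term of \eqref{eq:Fpk}. This is not a bookkeeping convention about $F_0$: for $n=0$ the identity $F_0=1$ only holds because the subtracted sum exactly equals $V-1$ by \eqref{eq:recV}, i.e.\ the extra $V$ is genuinely compensated by the correction term and must be produced by the construction itself (in the tree/mobile picture it is a weight per leaf of the $p$-ary tree associated with the excursion, of which there are $n(p-1)+1$). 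Your accounting as written does not produce it.

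The more serious gap is the step you yourself flag as the crux: why the defect expansion closes at first order with a purely $V$-weighted environment. The justification you give does not work as stated. If you extract the \emph{first} fall whose attached element of $\mathcal{V}$ lies outside $\mathcal{V}_h$, then the portion of the excursion \emph{before} that fall must be violation-free and therefore still carries the height-dependent weights $V_{h'}$, not $V$; so the identity ``(all-$V$ count) $=F_n+$ (one marked defect in an all-$V$ environment)'' does not follow from a naive first-violation decomposition. Conversely, a genuine inclusion--exclusion over the set of violated falls produces higher-order terms whose cancellation you would have to establish. What actually closes the argument in \cite[Section 3.3]{BouGui10} is a re-rooting/cyclic-shift bijection that simultaneously removes the positivity constraint on the remaining path (this is where the ballot factor $\frac{jp+1}{np+1}\binom{np+1}{n-j}$ and the shift from length $np$ to $np+1$ enter) and frees the environment; that bijection is the entire content of the proposition, and it is missing from your sketch. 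As it stands, your proposal is a correct identification of the shape of the answer together with a plausibility argument, not a proof.
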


It would be interesting to have a similar formula for all $F_n^{(r)}$.
We could then apply Theorem~\ref{thm:det} and Corollary~\ref{cor:Vi} and
deduce a general expression for $V_i$. We may express $F_n^{(r)}$ in
terms of $F_n$ using a Tutte-like decomposition \cite[Section
5.3]{BouJeh06}, for instance for $p=3$ and $r=1$ we have

\begin{equation}
  F^{(1)}_n = \sum_{l \geq 1} x_l F_{n+l} + \sum_{i=0}^{n} F_i F_{n-i}.
  \label{eq:fk1tri}
\end{equation}

Applying Proposition~\ref{prop:consquant} yields in principle to an
expression of $F_n^{(r)}$ in terms of $V$. In practice it seems to
quickly become intractable, except in the case of Eulerian
triangulations on which we focus next.

\section{Application to Eulerian triangulations}
\label{sec:eulertri}

We now specialize our formulas to the case of Eulerian triangulations,
i.e. $p=3$ and $x_k=x \delta_{k,1}$. Note first that \eqref{eq:recVn}
reduces to \eqref{eq:recVntri}, while \eqref{eq:recV} yields the simple
equation $V=1+2xV^2$, hence $V$ is as in
Proposition \ref{prop:det3}. Furthermore, \eqref{eq:Fpk} and
\eqref{eq:fk1tri} become respectively:
\begin{equation}\label{eq:FketF1tri}
  F_n = \left( p^{(0)}_n \cdot (1-2xV) -p^{(3)}_{n-1} \cdot (xV) \right) V^{2n+1},
  \qquad
  F_n^{(1)}= x F_{n+1} + \sum_{i=0}^{n} F_i F_{n-i},
\end{equation}
where we introduce the shorthand notation $p_n^{(r)} :=
\frac{r+1}{3n+r+1} \binom{3n+r+1}{n}$, the number of $3$-paths from
$(-r,r)$ to $(0,3n)$. By the relation
$p_n^{(1)}=p_n^{(0)}+p_{n-1}^{(3)}$ (whose path interpretation is
obvious), we may rewrite
\begin{equation}
  \label{eq:Fktribis}
  F_n = \left( p^{(0)}_n \cdot(1-xV) -p^{(1)}_{n}\cdot(xV) \right) V^{2n+1}.
\end{equation}
Using this relation in the expression \eqref{eq:FketF1tri} for
$F_n^{(1)}$, and using some summation formulas for the $p_n^{(r)}$'s
(following from several classical path decompositions), we arrive at
\begin{equation}
  \label{eq:Fk1tribis}
   F_n^{(1)} = \left( p^{(1)}_n \cdot(1-xV)^2 -p^{(0)}_{n+1}\cdot(xV) \right) V^{2n+2}.
\end{equation}

We are now ready to substitute these formulas into the determinants
$\HH 3 n m$ ($m=0,1,2$) so as to prove Proposition~\ref{prop:det3}. We
will once again use the LGV lemma, to give another non-intersecting
lattice path interpretation (\emph{NILP}) of the $\HH 3 n m$'s. Let us
consider the same acyclic directed planar graph introduced in Section
\ref{sec:inversion} for $p=3$ (i.e. the graph on which the $3$-paths
live), but now with a uniform weight $1$ on every edge (rise or
fall). Let $A_n$ and $B_n$ be as in \eqref{eq:soursink} (for $p=3$),
namely $A_n=(-\lfloor 3n/2 \rfloor, r_n)$ and $B_n=(3n,0)$, and let us
extend the graph by adding the vertices
\begin{equation}
  \label{eq:newsources}
  A'_n := (-(3n+1)/2,-1), \qquad n \geq 0
\end{equation}
and the special weighted edges
\begin{equation}
  \begin{cases}
    e_{2n}:=A'_{2n} \to A_{2n} &\text{weighted }1\!-\!xV\\
    \bar e_{2n}:=A'_{2n} \to A_{2n+1} &\text{weighted }-\!xV
  \end{cases}
  \quad
  \begin{cases}
    e_{2n+1}:=A'_{2n+1} \to A_{2n+1} &\text{weighted }(1\!-\!xV)^2\\
    \bar e_{2n+1}:=A'_{2n+1} \to A_{2n+2} &\text{weighted }-\!xV
  \end{cases}.
  \label{eq:specialedges}
\end{equation}
We readily see that the resulting graph is planar and,
up to a factor $V^{2(i+j)+1}$ (resp.  $V^{2(i+j+1)}$), that the
generating function for paths from $A'_{2i}$ (resp. $A'_{2i+1}$) to
$B_j$ ($i,j \geq 0$) is nothing but $F_{i+j}$ (resp. $F_{i+j}^{(1)}$).
We may get rid of those extra factors by defining
\begin{equation}
  T_{3k+1} := \frac{\HH 3 0 {k-1}}{V^{k(3k-1)/2}}, \qquad T_{3k+2} :=
  \frac{\HH 3 1 {k-1}}{V^{k(3k+1)/2}}, \qquad
  T_{3k+3} := \frac{\HH 3 2 {k-1}(1-xV)}{V^{k(3k+3)/2}}.\label{eq:Tdef}
\end{equation}
and it follows from the LGV lemma that $T_{3k+1}$ (resp.\ $T_{3k+2}$
and $T_{3k+3}/(1-xV)$) is the generating function of NILP
configurations from the sources $A'_0,\dots,A'_{k-1}$ (resp.\
$A'_1,\dots,A'_{k}$ and $A'_2,\dots,A'_{k+1}$) to the sinks
$B_0,\ldots,B_{k-1}$. By convention, empty configurations (for $n=0$)
receive the weight $1$ so that $T_1=T_2=1$ and $T_3=1-xV$. We claim
that the $T_n$'s satisfy the recurrence relation
\begin{equation}
  T_{n+3}  = (1-xV) T_{n+1} - xV T_{n} \label{eq:Tnrec}
\end{equation}
and this is sufficient to prove that $T_n=\varphi_n(xV)$, since the
Fibonacci polynomials with $z=xV$ satisfy the same recurrence and
initial conditions, by \eqref{eq:fibdef}. We prove \eqref{eq:Tnrec}
via a case-by-case analysis, depending on the residue of $n$ modulo 3.
Here we only check the case $n=3k+1$, and leave the others cases to
the reader (beware the extra $1-xV$ factor in $T_{3k+3}$). Consider a
NILP configuration from $A'_0,\dots,A'_{k}$ to $B_0,\ldots,B_{k}$, as
counted by $T_{3k+4}$. We first easily see that, for all $i$, the path $i$
(from $A'_i$ to $B_i$) visits $(0,3i)$, and all the steps afterward are
falls. We now distinguish whether the step before $(0,3k)$ on the
uppermost path is a rise or a fall.
\begin{figure}
\begin{center}
  \subfigure[If the last step before $(0,3k)$ is a rise, the upper-most path is
deleted.\label{subfig:LGV1}]{\includegraphics[scale =
0.85]{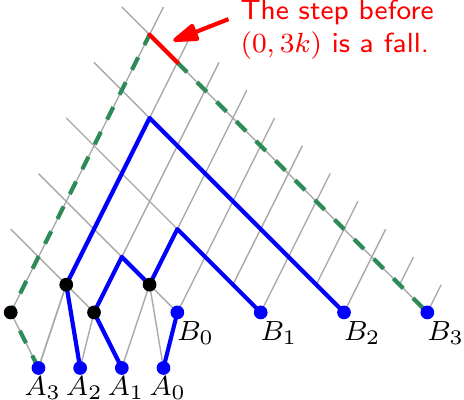}}\qquad\qquad
  \subfigure[If the last step before $(0,3k)$ is a fall, for each path, we
replace the steps after the vertex $(-1,3i-1)$ by $3i-1$ falls\label{subfig:LGV2}]{\includegraphics[scale=0.85]{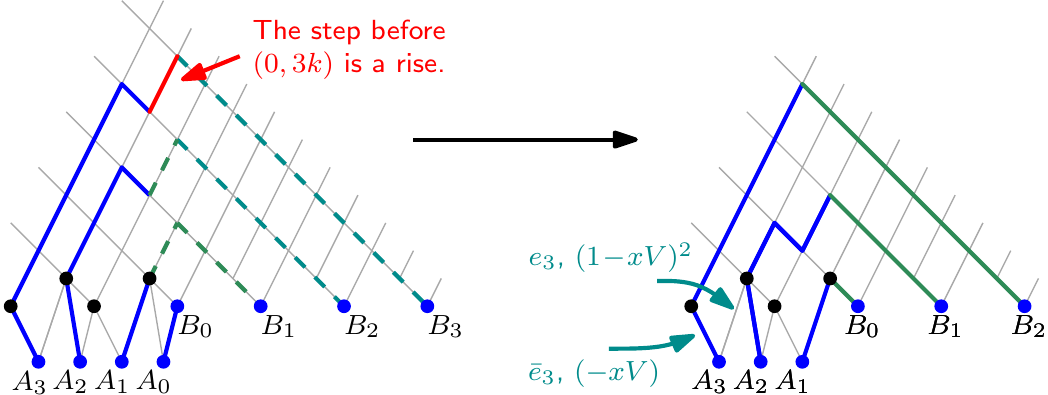}}
\caption{The decomposition of a configuration of $\HH 3 0 4$ into a
configuration of $\HH 3 0 3$ (see \subref{subfig:LGV1}) or of $\HH 3 1 3$ (see \subref{subfig:LGV2}).}
\end{center}
\label{fig:LGV3}
\end{figure}
\begin{itemize}
\item If it is a fall, then all the preceding steps are rises, so that
  the path visits $A_{k+1}$ and passes through the special edge $\bar
  e_{k}$, thus has weight $-xV$. All the remaining paths form an
  unconstrained NILP configuration from the sources
  $A'_0,\ldots,A'_{k-1}$ to the sinks $B_0,\ldots,B_{k-1}$, with
  generating function $T_{3k+1}$. The global contribution of this case
  is $-x V T_{3k+1}$ (see Fig.\ref{subfig:LGV1}).
\item If it is a rise, i.e. the uppermost path visits $(-1,3k-1)$,
  then, by non-intersection, the path $i$ must visit $(-1,3i-1)$ for
  all $i \geq 1$, and the path $0$ is reduced to the special edge
  $e_0$, thus has weight $1-xV$.  Remove the path $0$, and for each path
  $i \geq 1$, replace its steps after the vertex $(-1,3i-1)$ by $3i-1$ falls
  (see Fig.\ref{fig:LGV3}): the resulting configuration is a NILP
  configuration from the sources $A'_1,\ldots,A'_k$ to the sinks
   $B_0,\ldots,B_{k-1}$, with generating function $T_{3k+2}$.
  Since the correspondence is one-to-one, the global contribution of this
  case is $-x V T_{3k+2}$ (see Fig.\ref{subfig:LGV2}).
\end{itemize}
This shows that \eqref{eq:Tnrec} holds with $n=3k+1$. Together with the cases
$n=3k+2$ and $n=3k+3$, this concludes the proof of Proposition \ref{prop:det3}.

\section*{Acknowledgements}\label{sec:ack}
We thank P. Di Francesco for fruitful discussions. The work of M.A. in
partially funded by the ERC under the agreement "ERC StG 208471 -
ExploreMap".

\bibliographystyle{abbrvnat}
\bibliography{constellation}

\begin{thebibliography}{17}
\providecommand{\natexlab}[1]{#1}
\providecommand{\url}[1]{\texttt{#1}}
\expandafter\ifx\csname urlstyle\endcsname\relax
  \providecommand{\doi}[1]{doi: #1}\else
  \providecommand{\doi}{doi: \begingroup \urlstyle{rm}\Url}\fi

\bibitem[Bousquet-M{\'e}lou(2006)]{MBM06}
M.~Bousquet-M{\'e}lou.
\newblock Limit laws for embedded trees: applications to the integrated
  super{B}rownian excursion.
\newblock \emph{Random Structures and Algorithms}, 29\penalty0 (4):\penalty0
  475--523, 2006.
\newblock arXiv:math/0501266 [math.CO].

\bibitem[Bousquet-M{\'e}lou and Jehanne(2006)]{BouJeh06}
M.~Bousquet-M{\'e}lou and A.~Jehanne.
\newblock Polynomial equations with one catalytic variable, algebraic series
  and map enumeration.
\newblock \emph{Journal of Combinatorial Theory, Series B}, 96\penalty0
  (5):\penalty0 623--672, 2006.
\newblock arXiv:math/0504018 [math.CO].

\bibitem[Bousquet-M{\'e}lou and Schaeffer(2000)]{BouSch00}
M.~Bousquet-M{\'e}lou and G.~Schaeffer.
\newblock Enumeration of planar constellations.
\newblock \emph{Advances in Applied Mathematics}, 24\penalty0 (4):\penalty0
  337--368, 2000.

\bibitem[Bouttier(2005)]{BouPhD}
J.~Bouttier.
\newblock \emph{{Physique statistique des surfaces al{\'e}atoires et
  combinatoire bijective des cartes planaires}}.
\newblock These, Universit{\'e} Pierre et Marie Curie - Paris VI, June 2005.
\newblock URL \url{http://tel.archives-ouvertes.fr/tel-00010651/en/}.

\bibitem[Bouttier and Guitter(2011)]{BouGui10}
J.~Bouttier and E.~Guitter.
\newblock Planar maps and continued fractions.
\newblock \emph{Comm. Math. Phys.}, to appear, 2011.
\newblock \doi{10.1007/s00220-011-1401-z}.
\newblock arXiv:1007.0419 [math.CO].

\bibitem[Bouttier et~al.(2003{\natexlab{a}})Bouttier, Di~Francesco, and
  Guitter]{BDG03a}
J.~Bouttier, P.~Di~Francesco, and E.~Guitter.
\newblock Geodesic distance in planar graphs.
\newblock \emph{Nuclear Physics B}, 663\penalty0 (3):\penalty0 535 -- 567,
  2003{\natexlab{a}}.
\newblock arXiv:cond-mat/0303272 [cond-mat.stat-mech].

\bibitem[Bouttier et~al.(2003{\natexlab{b}})Bouttier, Di~Francesco, and
  Guitter]{BDG03b}
J.~Bouttier, P.~Di~Francesco, and E.~Guitter.
\newblock Statistics of planar graphs viewed from a vertex: a study via labeled
  trees.
\newblock \emph{Nuclear Physics B}, 675\penalty0 (3):\penalty0 631--660,
  2003{\natexlab{b}}.
\newblock arXiv:cond-mat/0307606 [cond-mat.stat-mech].

\bibitem[Bouttier et~al.(2004)Bouttier, Di~Francesco, and Guitter]{BDG04}
J.~Bouttier, P.~Di~Francesco, and E.~Guitter.
\newblock Planar maps as labeled mobiles.
\newblock \emph{Electron. J. Combin}, 11\penalty0 (1):\penalty0 R69, 2004.
\newblock arXiv:math/0405099 [math.CO].

\bibitem[Di~Francesco(2005)]{diFraIntegrable}
P.~Di~Francesco.
\newblock Geodesic distance in planar graphs: An integrable approach.
\newblock \emph{The Ramanujan Journal}, 10\penalty0 (2):\penalty0 153--186,
  2005.
\newblock arXiv:math/0506543 [math.CO].

\bibitem[Flajolet(1980)]{FlaContFrac}
P.~Flajolet.
\newblock Combinatorial aspects of continued fractions.
\newblock \emph{Discrete Mathematics}, 32\penalty0 (2):\penalty0 125--161,
  1980.

\bibitem[Flajolet and Sedgewick(2009)]{AnalyticCombinatorics}
P.~Flajolet and R.~Sedgewick.
\newblock \emph{Analytic combinatorics}.
\newblock Cambridge University Press, 2009.

\bibitem[Gessel and Viennot(1989)]{GesVie89}
I.~Gessel and X.~Viennot.
\newblock Determinants, paths, and plane partitions.
\newblock \emph{preprint}, 132\penalty0 (197.15), 1989.

\bibitem[Goulden and Jackson(1997)]{GouJack97}
I.~Goulden and D.~Jackson.
\newblock Transitive factorisations into transpositions and holomorphic
  mappings on the sphere.
\newblock \emph{Proceedings of the American Mathematical Society}, 125\penalty0
  (1):\penalty0 51--60, 1997.

\bibitem[Hurwitz(1891)]{Hurwitz}
A.~Hurwitz.
\newblock {\"U}ber {R}iemann'sche {F}l{\"a}chen mit gegebenen
  {V}erzweigungspunkten.
\newblock \emph{Mathematische Annalen}, 39\penalty0 (1):\penalty0 1--60, 1891.

\bibitem[Lando and Zvonkin(2004)]{LandoZvonkin}
S.~Lando and A.~Zvonkin.
\newblock \emph{Graphs on surfaces and their applications}, volume 141.
\newblock Springer Verlag, 2004.

\bibitem[Lindstr\"om(1969)]{Lindstrom}
B.~Lindstr\"om.
\newblock Determinants on semilattices.
\newblock \emph{Proc. Amer. Math. Soc.}, 20:\penalty0 207--208, 1969.

\bibitem[Viennot(1998)]{ViennotLACIM}
X.~G. Viennot.
\newblock Une th\'eorie combinatoire des polyn\^omes orthogonaux.
\newblock \emph{Notes de Cours}, UQAM, Montr\'eal, 1998.
\newblock URL
  \url{http://web.mac.com/xgviennot/Xavier_Viennot/polyn%C3%B4mes_orthogonaux.%
html}.

\end{thebibliography}

\end{document}